\documentclass[11pt]{amsart}
\usepackage{url,enumitem, amsopn}
\usepackage{xcolor}
\usepackage[T1]{fontenc}
\usepackage{cleveref}
\usepackage{tikz}
\usepackage{wrapfig}
\usepackage{lipsum}
\usepackage{pgfplots}
\usepackage{tikz}
\usetikzlibrary{patterns}
\pgfdeclarelayer{nodelayer}
\pgfdeclarelayer{edgelayer}
\pgfsetlayers{nodelayer,main,edgelayer}

\newcommand{\cal}{\mathcal}

\newcommand{\reals}{\mbox{$\mathbb R$}}

\newcommand{\nats}{\mbox{$\mathbb N$}}

\newcommand{\floor}[1]{\lfloor #1 \rfloor}
\newcommand{\ceil}[1]{\lceil #1 \rceil}

\newcommand{\comment}[1]{}

\usepackage{comment}

\usepackage{amsmath, amsfonts, amssymb, latexsym, stmaryrd}
\usepackage{graphicx}                   

\def\squarebox#1{\hbox to #1{\hfill\vbox to #1{\vfill}}}
\def\qed{\hspace*{\fill}
        \vbox{\hrule\hbox{\vrule\squarebox{.667em}\vrule}\hrule}\smallskip}

\theoremstyle{plain}
\newtheorem{lemma}{Lemma}[section]
\newtheorem{theorem}[lemma]{Theorem}
\newtheorem{corollary}[lemma]{Corollary}
\newtheorem{proposition}[lemma]{Proposition}

\theoremstyle{definition}

\newtheorem{observation}[lemma]{Observation}
\newtheorem{definition}[lemma]{Definition}

\newtheorem{example}[lemma]{Example}

\newtheorem*{rmk*}{Remark}
\newtheorem*{rmks*}{Remarks}
\newtheorem*{conventions*}{Conventions}
\newtheorem*{convention*}{Convention}

\def\squareforqed{\hbox{\rlap{$\sqcap$}$\sqcup$}}
\def\qed{\ifmmode\squareforqed\else{\unskip\nobreak\hfil
\penalty50\hskip1em\null\nobreak\hfil\squareforqed
\parfillskip=0pt\finalhyphendemerits=0\endgraf}\fi}

\newlength{\tablength}
\newlength{\spacelength}
\newcommand{\tabstar}{\hspace*{\tablength}}
\newcommand{\spacestar}{\hspace*{\spacelength}}
\def\obeytabs{\catcode`\^^I=\active}
{\obeytabs\global\let^^I=\tabstar}
{\obeyspaces\global\let =\spacestar}
\newenvironment{display}{\begingroup\obeylines\obeyspaces\obeytabs}{\endgroup}
\newenvironment{prog}{\begin{display}\parskip0pt\sf}{\end{display}}

\author{Geir Agnarsson}
\address{Department of Mathematical Sciences \\ 
George Mason University \\ Fairfax, VA  22030}
\email{geir@math.gmu.edu}

\author{Miko\l{}aj Sier\.z\k{e}ga}
\address{Department of Mathematical Sciences\\
George Mason University\\
Fairfax, VA 22030}
\address{
Faculty of Mathematics, Informatics and Mechanics\\
University of Warsaw\\
Banacha 2, 02-097 Warsaw, Poland}
\email{msierzeg@gmu.edu\\ m.sierzega@uw.edu.pl}

\title{Elements represented as intersections of sets}

\subjclass[MSC2020]{05D05, 05A15, 05C65}

\keywords{Finite sets, hypergraph, Sperner's theorem}

\date{\today}

\begin{document}

\begin{abstract}
  For a natural number $n$ let $[n] = \{1,\ldots,n\}$. We say
  that a family ${\cal{S}}\subseteq 2^{[n]}$ is \emph{representing}
  if every singleton set of $[n]$ is an intersection of some sets
  from ${\cal{S}}$. We show that the smallest possible cardinality
  of a representing set for $[n]$ is the discrete inverse $s(n)$
  of the Sperner's function $n\mapsto \binom{n}{\lfloor n/2\rfloor}$,
  which by Sperner's Theorem is the maximum number of elements in an
  antichain in $2^{[n]}$ when viewed as subset (or boolean) lattice.
  Specifically, $s(n)$ is then the smallest positive integer such that
  $2^{[n]}$ contains an $n$-element antichain. 
  Some generalization, further applications and asymptotics in terms of the 
  second real branch of the Lambert $W$ function are presented.
  \end{abstract} 

\maketitle

\section{Introduction}
\label{sec:intro}

For a natural number $n$ let $[n] = \{1,\ldots,n\}$. In this article
we investigate properties of a family ${\cal{S}}$ of subsets of $[n]$,
so ${\cal{S}}\subseteq 2^{[n]}$, such that each element of $[n]$ can
be represented by an intersection of sets from ${\cal{S}}$. This relates
to identify each element in $[n]$ by distinct labels, one label for each of the
sets in ${\cal{S}}$ it is contained in. As an example, classical binary search 
can be identified by a $0/1$ or ``left/right'' or ``yes/no'' label at each step 
(see Example~\ref{exa:kcube} here below), thereby identifying
an object via a sequence of questions having ``yes/no'' answers~\cite{CT}. In this
case, using a total of $k$ steps amounts to a total of $2k$ types of labels, two
for each step. Needless to say, binary searches, as well as other types of searches,
can be presented as games, where one wants to get as accurate estimate as
possible, using limited ``yes/no'' questions, where a given object is located.
For a comprehensive survey of such search games, see~\cite{sg-survey}. Casually
speaking, the ``yes/no'' answer in each of the $k$ steps in classical binary
search can be viewed as independent of each other.

By not insisting on independence, however, it is possible to perform considerably
fewer searches and this is the purpose of this article, namely to identify
the exact smallest possible number ${\rho}(n)$ of subsets of $[n]$ such 
that each element
of $[n]$ is uniquely determined by being the sole element contained in
an intersection of some of the mentioned subsets of $[n]$
(see Definition~\ref{def:representing} here below). The study
of subsets of a given set, like $[n]$, and their intersections has itself a
long history, but traditionally it concerns itself with a given collection of
sets and the intersections of pairs of the sets of $[n]$ that have a fixed
cardinality, for example in the study of block designs. A classic
survey of such study can be found in~\cite{RyserHJ}.

The remainder of this article is organized as follows: In
Section~\ref{sec:representing} we define our main objects and prove
Theorem~\ref{thm:intrep}, the main result of that section. We also provide
some examples to illuminate the main ideas of the section.

In Section~\ref{sec:set-partitions} we apply and generalize 
Theorem~\ref{thm:intrep} to
certain set partitions. Namely, we demonstrate how we can achieve the
best accuracy in determining the location of a given element when
we don't have optimal number of subsets of $[n]$ in Theorem~\ref{thm:seta}. 
We also present two explicit examples.

Finally, in Section~\ref{sec:formula} we present an asymptotic formula
of the number ${\rho}(n)$ using the lesser known second real branch of the 
Lambert $W$ function~\cite{Lambert-wiki,Lambert-Wolfram} and discuss some 
consequences.

\section{A result on representing family of subsets}
\label{sec:representing}

\paragraph{Conventions} 
Let ${\nats} = \{1,2,\ldots\}$ denote the set of natural numbers,
${\nats}_0 = \{0,1,2,\ldots\}$ the set of nonnegative integers, ${\reals}$ 
the set of real numbers and ${\reals}_+ = \{x\in {\reals} : x > 0\}$ 
the set of positive real numbers. For real numbers $a<b$ we will use the
following notation for real intervals:
\begin{eqnarray*}
[a,b]  =  \{x\in\reals : a\leq x\leq b\}, & \ \ & 
[a,b[  =  \{x\in\reals : a\leq x < b\}, \\
 ]a,b]  =  \{x\in\reals : a < x\leq b\}, & \ \ &
 ]a,b[  =  \{x\in\reals : a < x < b\}. 
\end{eqnarray*}
For $n\in\nats$ we let $[n] = \{1,\ldots,n\}$ 
(as mentioned earlier in the introduction) and we denote the set of all subsets of 
$[n]$ by $2^{[n]}$. Note that $2^{[n]}$ is a boolean lattice w.r.t.~set inclusion and
hence in particular a partially ordered set (poset). Any subset 
${\cal{S}}\subseteq 2^{[n]}$ will then automatically become a sublattice, or a subposet,
of $2^{[n]}$ by inclusion. For $\ell\leq n$ denote by 
$\binom{[n]}{\ell}\subseteq 2^{[n]}$ the set of all subsets of $[n]$ that consist of
$\ell$ elements. 
A \emph{partition} of a finite set $X$ is collection of subsets 
$\{P_1,\ldots,P_q\}$ of $X$ that are (i) pairwise disjoint, so $P_i\cap P_j = \emptyset$
when $i\neq j$ and that (ii) cover $X$, so 
$P_1\cup\cdots\cup P_q = X$\footnote{Strictly speaking, this is what many call
a \emph{weak partition} since we do not insist here 
that $P_i\neq\emptyset$ for each $i$.}.
For real functions $f, g : {\reals}_+\rightarrow{\reals}_+$,
(resp.~integer functions $f,g: {\nats}\rightarrow {\nats}$), we let $f(x)\sim g(x)$ as $x$ tends to infinity denote that $\lim_{x\rightarrow\infty}\frac{f(x)}{g(x)} = 1$
(resp. we let $f(n)\sim g(n)$ as $n$ tends to infinity as an integer denote that 
$\lim_{n\rightarrow\infty}\frac{f(n)}{g(n)} = 1$). 
The natural logarithm (with base number $e\approx 2.718281828...$) will be denoted by $\log$ and the base-2 logarithm by $\lg$.
\begin{definition}
  \label{def:representing}
  A family of subsets ${\cal{S}}\subseteq 2^{[n]}$ \emph{represents} 
  $[n]$, or is \emph{representing} for $[n]$,  if for every $i\in [n]$ the singleton 
  $\{i\}$ is the intersection of some sets from  $\cal S$. For each $n\in\nats$ we let 
  ${\rho}(n):=\min\Big\{|\cal S|\,:\cal{S} \mbox{ represents }[n]\Big\}$.
\end{definition}
\begin{example}
Trivially, any family $\mathcal{S}\subseteq 2^{[n]}$ that contains all singletons
$\{1\},\{2\},\ldots,\{n\}$ represents $[n]$. In particular $\cal E(n)\leq n$.
\end{example}
\begin{example}
  \label{exa:mn} 
  Let $m,n\in\nats$. By relabeling the elements in a classical 
  $[m]\times[n]$ array by the integers $1,2,\ldots, mn$ in an any fashion, i.e., arranging numbers $1,2,\dots, mn$ in an 
  array with $m$ rows and $n$ columns,  we see that
  ${\cal{S}} = \left(\bigcup_{i\in[m]}\{i\}\times[n]\right) 
  \cup\left(\bigcup_{j\in[n]}[m]\times\{j\}\right)$
  represents $[m]\times[n]$ since every singleton is identified by the intersection of 
  the relevant row and column: $\{(a,b)\} = \{a\}\times[n]\cap[m]\times\{b\}$, see
  Figures~\ref{fig:RRCCC}, \ref{fig:3x2matrix} for an illustration of this for $m=3$ and
  $n=2$. Here we have $|\cal S|= m+n$, so if $m,n>1$ and $m>2$ or $n>2$ then 
  $\rho(mn) \leq |\cal{S}| = m+n < mn$ showing that $\rho(n) < n$ for each 
  composite integer $n\geq 6$.
\begin{figure}[htbp!]
\centering
\begin{tikzpicture}[scale=1]

\fill[pattern=north east lines, pattern color=gray] (0,2) rectangle (3,3);
\fill[pattern=north west lines, pattern color=gray] (8,0.5) rectangle (9,2.5);

\draw (0,2) grid (3,3);
\draw (0,0) grid (3,1);

\node at (0.5,2.5) {1};
\node at (1.5,2.5) {2};
\node at (2.5,2.5) {3};

\node at (0.5,0.5) {4};
\node at (1.5,0.5) {5};
\node at (2.5,0.5) {6};

\node[left] at (0,2.5) {$R_1:$};
\node[left] at (0,0.5) {$R_2:$};

\node[left] at (5,1.5) {$C_1:$};
\draw (5,0.5) rectangle (6,2.5);
\draw (5,1.5) -- (6,1.5);

\node at (5.5,2) {1};
\node at (5.5,1) {4};

\node[left] at (8,1.5) {$C_2:$};
\draw (8,0.5) rectangle (9,2.5);
\draw (8,1.5) -- (9,1.5);

\node at (8.5,2) {2};
\node at (8.5,1) {5};

\node[left] at (11,1.5) {$C_3:$};
\draw (11,0.5) rectangle (12,2.5);
\draw (11,1.5) -- (12,1.5);

\node at (11.5,2) {3};
\node at (11.5,1) {6};

\end{tikzpicture}

\caption{The representing family $\cal S=\{R_1,R_2,C_1,C_2,C_3\}$.}
\label{fig:RRCCC}
\end{figure}
\begin{figure}[htbp!]
\centering
\begin{tikzpicture}[scale=1]


\fill[pattern=north east lines, pattern color=gray] (0,1) rectangle (3,2);

\fill[pattern=north west lines, pattern color=gray] (1,0) rectangle (2,2);

\fill[pattern=north east lines, pattern color=gray] (1,1) rectangle (2,2);

\draw[step=1,black] (0,0) grid (3,2);

\node at (0.5,1.5) {1};
\node at (1.5,1.5) {2};
\node at (2.5,1.5) {3};

\node at (0.5,0.5) {4};
\node at (1.5,0.5) {5};
\node at (2.5,0.5) {6};

\end{tikzpicture}
\caption{The set $[6]$ identified with a $3\times 2$ array. The singleton $\{2\}$ is represented as an intersection of $R_1$ and $C_2$.}
\label{fig:3x2matrix}
\end{figure}
\end{example}

Let $m\leq n$ be natural numbers and suppose ${\cal{S}}\subseteq 2^{[n]}$
represents $[n]$. If $i\in [m]\subseteq [n]$, then there are sets
$A_1,\ldots,A_k\in {\cal{S}}$ such that $\{i\} = A_1\cap\cdots\cap A_k$
and hence $\{i\} = B_1\cap\cdots\cap B_k$ where $B_i = A_i\cap[m]$.
Therefore ${\cal{S}} = \{A\cap[m] : A\in {\cal{S}}\}$ represents $[m]$
and we have the following.
\begin{observation}
  \label{obs:monoton}
  If $m\leq n$ are natural numbers and $[n]$ can be represented by a 
  family containing $k$ subsets of $[n]$, then $[m]$ can be represented
  by at most $k$ subsets of $[m]$. Consequentially, ${\rho}(m)\leq {\rho}(n)$
  and so ${\rho} : \nats \rightarrow \nats$ is an increasing function. 
\end{observation}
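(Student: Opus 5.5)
The plan is to prove the statement directly by restriction, essentially formalizing the paragraph that precedes Observation~\ref{obs:monoton}. Suppose ${\cal S} = \{A_1,\ldots,A_k\}\subseteq 2^{[n]}$ represents $[n]$, where $m\leq n$. I will define the restricted family ${\cal S}|_{[m]} := \{A\cap[m] : A\in{\cal S}\}\subseteq 2^{[m]}$. Since distinct sets in ${\cal S}$ may have equal restrictions, $|{\cal S}|_{[m]}|\leq |{\cal S}| = k$; this is the only place where the phrase ``at most $k$'' is needed.

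Next I check that ${\cal S}|_{[m]}$ represents $[m]$. Fix $i\in[m]$. Since $i\in[n]$, there are $B_1,\ldots,B_t\in{\cal S}$ with $\{i\} = B_1\cap\cdots\cap B_t$. Intersecting both sides with $[m]$ and using distributivity of intersection gives
\[
\{i\}\cap[m] = (B_1\cap[m])\cap\cdots\cap(B_t\cap[m]).
\]
The left side is $\{i\}$ because $i\in[m]$, and each $B_j\cap[m]$ lies in ${\cal S}|_{[m]}$. Hence every singleton of $[m]$ is an intersection of members of ${\cal S}|_{[m]}$.

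For the consequence, I take ${\cal S}$ to be a representing family for $[n]$ with $|{\cal S}| = \rho(n)$; such a family exists, for example by the trivial example with all singletons, so the minimum is attained. The first part then yields a representing family for $[m]$ of size at most $\rho(n)$. Therefore $\rho(m)\leq\rho(n)$ by Definition~\ref{def:representing}, so $\rho$ is (weakly) increasing.

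There is no real obstacle here. The only subtlety is the empty intersection: whether $t\geq 1$ is guaranteed, or whether the empty intersection convention (which gives $[n]$) could arise. If $t=0$, then $\{i\}=[n]$ forces $n=1$, so $m=1$ and the restricted statement holds trivially. I would note this in one line.
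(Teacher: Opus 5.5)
Your proposal is correct and is essentially the paper's argument: restrict every set of the representing family to $[m]$, and intersect each representation $\{i\}=A_1\cap\cdots\cap A_t$ with $[m]$. Your extra remarks go slightly beyond what the paper writes out: distinct sets may collapse under restriction (hence ``at most $k$''), the minimum defining $\rho(n)$ is attained, $\rho$ is only weakly increasing, and the degenerate empty-intersection case needs a line. These are harmless refinements the paper leaves implicit.
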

\begin{example}
\label{exa:primes}
Amusingly, if $p\geq 7$ is a prime, then we have by Example~\ref{exa:mn} and
Observation~\ref{obs:monoton} that 
\[
{\rho}(p)\leq {\rho}(p+1) = {\rho}\left(2\cdot\frac{p+1}{2}\right)
\leq 2 + \frac{p+1}{2} < p
\]
and so ${\rho}(n) < n$ for every natural number $n\geq 6$, composite or prime.
\end{example}
\begin{example}
  \label{exa:kcube}
  Expanding on the above Examples~\ref{exa:mn} and \ref{exa:primes} and
  Observation~\ref{obs:monoton}, for each $k\in{\nats}_0$
  we can label the elements of $[2^k]$ by $\{0,1\}^k$. For each
  $i\in [k]$ let $A_i$, resp.~$B_i$, denote the set of elements of
  $\{0,1\}^k$ with $i$-th coordinate equal to $0$, resp.~$1$.
  In this case the family $\{A_i,B_i : i\in[k]\}$ of $2k$ sets
  represents $\{0,1\}^k$. If
  $n\in\nats$ and $k = \ceil{\lg n}$, then $[n]$ can be
  labeled by a subset of $\{0,1\}^k$ and hence, by
  Observation~\ref{obs:monoton} we therefore see that $[n]$ can be
  represented by $2k = 2\ceil{\lg n}$ subsets of $[n]$.
\end{example}
If the family ${\cal{S}} = \{A_1,\ldots,A_k\}$ represents the set
$[n]$, then we clearly
have an injection $[n]\hookrightarrow 2^{[k]}$ determined by some choice
of a subset $\{j_1,\ldots,j_h\}$ with $\{i\} = A_{j_1}\cap\cdots\cap A_{j_h}$
for each $i\in [n]$ and therefore
$n\leq 2^k - 1$. This together with Example~\ref{exa:kcube} shows that
the smallest possible cardinality of a family representing $[n]$ has
order ${\rho}(n)$ which is then between $\ceil{\lg(n+1)}$ and 
$2\ceil{\lg n}$.
Further, the image of the mentioned injection must be an antichain of
$2^{[k]}$ when ordered by inclusion, since if $i,j\in [n]$ are distinct
and $\{i\} = \bigcap_{h\in I}A_h$ and $\{j\} = \bigcap_{h\in J}A_h$, then
$I$ and $J$ must be incomparable w.r.t.~inclusion. By 
Sperner's Theorem~\cite{Sperner-wiki,Sperner}
the maximal cardinality of an antichain in $2^{[k]}$ is given by
$\binom{k}{\lfloor k/2\rfloor}$ and so we additionally have 
$\binom{k}{\lfloor k/2\rfloor}\geq n$. The lower bound of $k$ in terms
of $n$ implied by this inequality will indeed suffice as we will see. 
\begin{definition}
    \label{def:s(n)}
For $n\in\nats$ let $s(n)$ denote the smallest $k$ such that
$\binom{k}{\lfloor k/2\rfloor}\geq n$, so $s(n)$ is the smallest cardinality
of a finite set that has an antichain of $n$ subsets when ordered by
inclusion.    
\end{definition}
Note that $s(n)$ is the smallest cardinality of a finite set
that has $n$ subsets where no set contains another. 
The integer sequence $(s(n))_{n\geq 1}$ appears in
\emph{The On-Line Encyclopedia Of Integer Sequences}~\cite{oeis}
as sequence A305233~\cite{A305233}; the discrete inverse of the Sperner's
function $n\mapsto \binom{n}{\lfloor n/2\rfloor}$.
     
We can now state the following main result of this section.
\begin{theorem}
  \label{thm:intrep}
  For $n\in\nats$ we have ${\rho}(n) = s(n)$, so the smallest cardinality of a family
  ${\cal{S}}\subseteq 2^{[n]}$ that can represent $[n]$ equals $s(n)$.
\end{theorem}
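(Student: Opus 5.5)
The plan is to prove the two inequalities ${\rho}(n)\geq s(n)$ and ${\rho}(n)\leq s(n)$ separately.

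\emph{Lower bound.} This is essentially the discussion preceding Definition~\ref{def:s(n)}. Let ${\cal{S}}=\{A_1,\ldots,A_k\}$ represent $[n]$ with $k={\rho}(n)$. For each $i\in[n]$ choose an index set $I_i\subseteq[k]$ with $\{i\}=\bigcap_{h\in I_i}A_h$. For this to make sense when $I_i=\emptyset$ we must interpret the empty intersection as $[n]$; this only occurs when $n=1$, where $s(1)=0$ or $1$ is checked directly.

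Next we show that $i\mapsto I_i$ is injective and that its image is an antichain. Suppose $I_i\subseteq I_j$ with $i\neq j$. Then $\{j\}=\bigcap_{h\in I_j}A_h\subseteq\bigcap_{h\in I_i}A_h=\{i\}$, which is a contradiction. Hence $\{I_i : i\in[n]\}$ is an antichain of $n$ distinct subsets of $[k]$. By Sperner's Theorem $\binom{k}{\lfloor k/2\rfloor}\geq n$, and so $k\geq s(n)$ by the minimality in Definition~\ref{def:s(n)}.

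\emph{Upper bound.} This is a construction, and it is the step that needs care. Set $k=s(n)$. Since $\binom{k}{\lfloor k/2\rfloor}\geq n$, we may choose $n$ distinct sets $I_1,\ldots,I_n\in\binom{[k]}{\lfloor k/2\rfloor}$; these form an antichain because they all have the same size. For $h\in[k]$ define
\[
A_h=\{i\in[n] : h\in I_i\}\subseteq[n].
\]
We claim that $\bigcap_{h\in I_i}A_h=\{i\}$.

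Clearly $i$ lies in this intersection. Conversely, if $j$ lies in it, then $h\in I_j$ for every $h\in I_i$, so $I_i\subseteq I_j$. Since the $I$'s form an antichain, this forces $I_i=I_j$, and since they are distinct, $j=i$.

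The one subtlety is the case $I_i=\emptyset$, which happens only when $\lfloor k/2\rfloor=0$, i.e.\ $k\leq1$, i.e.\ $n=1$. This trivial case is handled by the same convention on empty intersections, or directly: if $k=1$ we may use $\{1\}$ itself. Thus ${\cal{S}}=\{A_1,\ldots,A_k\}$ represents $[n]$ with at most $k$ sets; some $A_h$ may coincide, which only helps. Therefore ${\rho}(n)\leq s(n)$.

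The main obstacle is not depth but bookkeeping. We need the "dual" family $A_h$ built from the antichain to recover each singleton, and the edge cases with empty index sets, i.e.\ $n=1$, must be consistent with the conventions of Definition~\ref{def:representing}. Everything else follows directly from Sperner's Theorem and the middle-layer antichain.
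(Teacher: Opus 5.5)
Your proof is correct and follows the paper's argument. The lower bound is the same injection-into-an-antichain plus Sperner argument given before Definition~\ref{def:s(n)}. The upper bound is the paper's middle-layer construction $A_h=\{S : h\in S\}$, except that you select $n$ of the $\lfloor k/2\rfloor$-sets directly rather than building the family on all of $\binom{[k]}{\lfloor k/2\rfloor}$ and restricting to $[n]$ via Observation~\ref{obs:monoton}. Your explicit handling of the empty-intersection edge case $n=1$ is a minor point the paper leaves implicit.
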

\begin{proof}
  Let $k\in\nats$ and let ${\left[\binom{k}{\lfloor k/2\rfloor}\right]}$ be labeled
  by the corresponding subsets $\binom{[k]}{\lfloor k/2\rfloor}$ of
  $[k]$ of cardinality $\lfloor k/2\rfloor$. For each $i\in [k]$ let
  $A_i = \{S\in\binom{[k]}{\lfloor k/2\rfloor} : i\in S\}$. That
  ${\cal{S}} = \{A_i : i\in[k]\}\subseteq 2^{[k]}$ represents
  $\binom{[k]}{\lfloor k/2\rfloor}$ is clear from construction: for
  each $S\in\binom{[k]}{\lfloor k/2\rfloor}$ we clearly have
  $\bigcap_{i\in S}A_i = \{S\}$. By Observation~\ref{obs:monoton}
  we have that for any $n \leq \binom{k}{\lfloor k/2\rfloor}$ the set $[n]$
  can be represented by $k$ subsets of $[n]$. Therefore, by the above
  definition of $s(n)$, the set $[n]$ can be represented by $s(n)$ subsets
  of $[n]$ and hence ${\rho}(n)\leq s(n)$.
  
  By the discussion before Definition~\ref{def:s(n)} we see that $s(n)$ is
  indeed the smallest possible integer such that $[n]$ can be represented by $s(n)$
  subsets of $[n]$ and so $s(n)\leq {\rho}(n)$.
  \end{proof}
Slightly more can be said about the intersecting representation. By the
above proof of Theorem~\ref{thm:intrep} we have further the following.
\begin{proposition}
  \label{prp:uniform}
  For $n\in\nats$ there is a family ${\cal{S}}\subseteq 2^{[n]}$
  of cardinality $s(n)$ such that each singleton set of $[n]$ is represented
  as an intersection of exactly $\lfloor s(n)/2\rfloor$ subsets of $[n]$, each subset
  of cardinality $\binom{s(n)-1}{\lfloor s(n)/2\rfloor -1}$.
\end{proposition}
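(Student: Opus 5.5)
The plan is to reuse the explicit construction from the proof of Theorem~\ref{thm:intrep} with $k=s(n)$ and then check the two counting claims directly. Put $k=s(n)$ and $m=\lfloor k/2\rfloor$. By Definition~\ref{def:s(n)} we have $\binom{k}{m}\geq n$. So we can fix an injection of $[n]$ into the middle layer $\binom{[k]}{m}$, labelling each $x\in[n]$ by an $m$-subset $S_x\subseteq[k]$. For each $i\in[k]$ let
\[
A_i=\{x\in[n] : i\in S_x\},
\]
and set ${\cal{S}}=\{A_1,\ldots,A_k\}$. This is the restriction to $[n]$ (as in Observation~\ref{obs:monoton}) of the family used in the proof of Theorem~\ref{thm:intrep}. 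We have $|{\cal{S}}|\leq k=s(n)$, and $|{\cal{S}}|=s(n)$ because Theorem~\ref{thm:intrep} forbids fewer sets.

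The first step is the ``exactly $\lfloor s(n)/2\rfloor$ sets'' part. For $x\in[n]$ I claim
\[
\bigcap_{i\in S_x}A_i=\{x\}.
\]
Indeed, $y$ lies in this intersection iff $S_x\subseteq S_y$. Since both sets have size $m$, this forces $S_x=S_y$, and hence $y=x$ by injectivity. So every singleton is an intersection of exactly $|S_x|=m=\lfloor s(n)/2\rfloor$ members of ${\cal{S}}$. This is just the antichain argument preceding Definition~\ref{def:s(n)}, specialised to a single layer.

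The second step is the cardinality claim, and I expect it to be the main obstacle. When $n=\binom{k}{m}$ the labelling is a bijection, and $|A_i|$ is the number of $m$-subsets of $[k]$ containing $i$, namely $\binom{k-1}{m-1}$, as required. For general $n<\binom{k}{m}$ we only get $|A_i\cap[n]|\leq\binom{k-1}{m-1}$, and equality for all $i$ is impossible in general. Double counting gives
\[
\sum_i|A_i|=nm,
\]
so equal sizes would require $k\mid nm$.

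I would therefore handle the statement by interpreting ``subset of cardinality $\binom{s(n)-1}{\lfloor s(n)/2\rfloor-1}$'' as the size of $A_i$ in the full layer $\binom{[k]}{m}$, in which $[n]$ sits via the labelling. Equivalently, I would first prove the statement for $n=\binom{k}{m}$ and then pass to general $n$ by Observation~\ref{obs:monoton}, noting that restriction preserves the ``exactly $m$ sets'' property. If an honest uniform-size statement for $[n]$ itself is wanted, the fallback is to choose the $n$ labels so that the $|A_i|$ are as balanced as possible, with sizes differing by at most one. One could attempt this by taking a union of orbits of the cyclic shift on $\binom{[k]}{m}$, but I would flag this as beyond what the construction from Theorem~\ref{thm:intrep} directly gives.
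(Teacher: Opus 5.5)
Your argument is the paper's own. The paper justifies the proposition only by pointing back to the middle-layer construction $A_i=\{S\in\binom{[k]}{\lfloor k/2\rfloor}: i\in S\}$ in the proof of Theorem~\ref{thm:intrep}, restricted to $[n]$ via Observation~\ref{obs:monoton}; your distinctness argument via $\rho(n)=s(n)$ and your check of the ``exactly $\lfloor s(n)/2\rfloor$ sets'' clause are correct.

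Your caveat on the cardinality clause is also right, and the issue goes beyond your construction. Your double count $\sum_i|A_i|=nm$ only covers your own family, since in a general family an element may lie in more than $m$ members. The literal statement is nevertheless false in general. For $n=4$ we have $s(4)=4$ and $\binom{3}{1}=3$. The only four $3$-subsets of $[4]$ are the complements of singletons, and any two of them meet in two points, so no singleton is an intersection of exactly two of them. What the construction (and the paper) actually delivers is $|A_i|=\binom{s(n)-1}{\lfloor s(n)/2\rfloor-1}$ when $n=\binom{s(n)}{\lfloor s(n)/2\rfloor}$, and only $|A_i|\le\binom{s(n)-1}{\lfloor s(n)/2\rfloor-1}$ for general $n$.
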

\begin{example}
  \label{exa:n=6}
  For $n=6$ we note $s(6) = 4$ since $\binom{4}{2} = 6$ and $\lfloor s(n)/2\rfloor = 2$.
  By listing the six elements of $\binom{[4]}{2}$ lexicographically
  \[
  \{1,2\} < \{1,3\} < \{1,4\} < \{2,3\} < \{2,4\} < \{3,4\},
  \]
  we can identify each set in $\binom{[4]}{2}$ by its order in this listing.
  This is equivalent to labeling of the edges of the complete graph $K_4$ on the
  vertices $[4]$ by the labels $1,2,\ldots,6$ such that edge $\{1,2\}$ is
  labeled by $1$, the edge $\{1,3\}$ is labeled by $2$ etc.,
  see Figure~\ref{fig:K4}.
\begin{figure}[h!]
    \centering
    \begin{tikzpicture}[every  node/.style={circle,draw,fill=white}]
  
\node (A) at (-2,2) {$A_1$};
\node (B) at (2,2) {$A_2$};
\node (C) at (2,-2) {$A_3$};
\node (D) at (-2,-2) {$A_4$};

        \draw [-] (A) edge node[circle=none,draw=none,midway, above] {$1$} (B) (A) edge node[circle=none,draw=none,fill=none,pos=0.25, above] {$2$} (C) (A) edge node[circle=none,draw=none,midway, right] {$3$} (D)  (B) edge node[circle=none,draw=none,midway, right] {$4$} (C) (B) edge node[circle=none,draw=none,fill=none,pos=0.25, above] {$5$} (D) (C) edge node[circle=none,draw=none,midway, above] {$6$}(D) ;
    \end{tikzpicture}
    \caption{A graph representation using $K_4$; the complete graph on four vertices.}
    \label{fig:K4}
\end{figure}
In this way we obtain the sets
\[
A_1 = \{1,2,3\}, \ \ A_2 = \{1,4,5\}, \ \ A_3 = \{2,4,6\}, \ \ A_4 = \{3,5,6\},
\]
and the representation
\begin{eqnarray*}
  \{1\} = A_1\cap A_2, & \{2\} = A_1\cap A_3, & \{3\} = A_1\cap A_4, \\
  \{4\} = A_2\cap A_3, & \{5\} = A_2\cap A_4, & \{6\} = A_3\cap A_4, \\
\end{eqnarray*}
where the indices of the sets forming the representation of $\{i\}$ 
indeed make up the $i$-th set of $\binom{[4]}{2}$ in the lexicographical order of them. 
Finally note that each singleton 
set is the intersection of $\floor{s(6)/2} = 2$ sets $A_i$ and each $A_i$
has exactly $3 = \binom{4-1}{2-1}$ elements.
\end{example}

\section{An application to set partitions}
\label{sec:set-partitions}

In this section we apply our results from Section~\ref{sec:representing} 
to investigate properties of a family $\cal{S}$ of subsets of $[n]$ such that
each element of $[n]$ can be "narrowed down"to a subset of a given cardinality,
instead of pinpointed exactly. For example, how many subsets are needed to pinpoint
each element of $[n]$ as one of seven possible elements, or pinpoint each element 
of $[n]$ within a given accuracy of a certain percentage points, say $11$ percent. 

Suppose $k,n\in{\nats}$ are such that $n\geq\binom{k}{\lfloor k/2\rfloor}$ and we
have a partition 
${\cal{P}} = \left\{P_S : S\in \binom{[k]}{\lfloor k/2\rfloor}\right\}$ 
indexed by subsets of $[k]$ of cardinality $\lfloor k/2\rfloor$. Analogous 
to the proof of Theorem~\ref{thm:intrep} we let 
$A_i' = \left\{ P_S : S\in \binom{[k]}{\lfloor k/2\rfloor}, \ i\in S\right\}$
and then $\cal{S}' = \{ A_i' : i\in [k]\}$ represents the set ${\cal{P}}$ and so
$\bigcap_{i\in S}A_i' = \{P_S\}$ for each $S\in \binom{[k]}{\lfloor k/2\rfloor}$. 

Since ${\cal{P}}$ is a partition of $[n]$ we have for each $i\in [k]$ 
that $B_i = \bigcup_{P_S\in A_i'}P_S \subseteq [n]$ and further that
\[
\bigcap_{i\in S}B_i = \bigcap_{i\in S}\left(\bigcup_{P_S\in A_i'}P_S\right) 
= P_S.
\]
For the given $k,n\in{\nats}$ we can clearly find a partition of $[n]$ into 
$\binom{k}{\lfloor k/2\rfloor}$ parts where each part has cardinality at most
$\left\lceil n\Big/\binom{k}{\lfloor k/2\rfloor}\right\rceil$. 
From this we can deduce the following.
\begin{corollary}
\label{cor:set1}
Let $a,k,n\in{\nats}$ be such that $a\leq n$ and $n\geq\binom{k}{\lfloor k/2\rfloor}$.
If  $\left\lceil n\Big/\binom{k}{\lfloor k/2\rfloor}\right\rceil \leq a$, then there 
are $k$ subsets $B_1,\ldots,B_k$ of $[n]$ such that for each element element $i\in [n]$
there is an intersection $P = \bigcap_{j\in I}B_j$ of some of the $B_j$s that contains
$i$ such that $|P|\leq a$.
\end{corollary}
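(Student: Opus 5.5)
Set $N = \binom{k}{\lfloor k/2\rfloor}$ and $m=\lfloor k/2\rfloor$. The plan is to build a balanced partition of $[n]$ indexed by $\binom{[k]}{m}$, then use the construction from the proof of Theorem~\ref{thm:intrep} and the discussion just before the corollary.

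First I will fix a concrete partition. Index the $m$-subsets of $[k]$ as $S_1,\ldots,S_N$ in any order. Write $n = qN + r$ with $0\le r<N$. Let $P_{S_1},\ldots,P_{S_r}$ be consecutive blocks of $q+1$ elements of $[n]$ and $P_{S_{r+1}},\ldots,P_{S_N}$ consecutive blocks of $q$ elements. This is a partition of $[n]$. Since $n\ge N$ we have $q\ge 1$, so every part is nonempty, although this is not actually needed. Every part has size at most $\lceil n/N\rceil\le a$.

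Next, for $j\in[k]$ define
\[
B_j=\bigcup_{S\in\binom{[k]}{m},\ j\in S}P_S.
\]
The key step is to check that $\bigcap_{j\in S}B_j=P_S$ for each $S\in\binom{[k]}{m}$.

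For the inclusion $\supseteq$: $P_S\subseteq B_j$ for every $j\in S$.

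For the inclusion $\subseteq$: take $x$ in the intersection. Because the parts are pairwise disjoint and cover $[n]$, $x$ lies in a unique part $P_T$. From $x\in B_j$ we get $j\in T$ for every $j\in S$, hence $S\subseteq T$. Both $S$ and $T$ have cardinality $m$, so $S=T$. This is exactly the antichain property of $\binom{[k]}{m}$ used in Section~\ref{sec:representing}.

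Finally, for $i\in[n]$ let $S$ be the index with $i\in P_S$, and take $I=S$. Then $P=\bigcap_{j\in I}B_j=P_S$ contains $i$ and satisfies $|P|\le\lceil n/N\rceil\le a$, which proves the corollary.

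None of the steps is a real obstacle. The only point needing care is the disjointness argument in the identity $\bigcap_{j\in S}B_j=P_S$, which the discussion before the corollary states but does not justify. The edge case $k=0$, if allowed, is harmless: then $N=1$, the family of $B_j$ is empty, and the empty intersection is read as $[n]$, with $|[n]|=n=\lceil n/1\rceil\le a$.
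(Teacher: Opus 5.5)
Your proposal is correct and follows essentially the paper's route: a balanced partition of $[n]$ into $\binom{k}{\lfloor k/2\rfloor}$ parts indexed by the middle layer of $2^{[k]}$, sets $B_j$ formed as unions of the parts whose index contains $j$, and the identity $\bigcap_{j\in S}B_j=P_S$. Your disjointness-plus-antichain argument for that identity and your explicit balanced partition fill in details the paper only asserts; since the paper takes $\nats=\{1,2,\ldots\}$, the empty-intersection convention you invoke for $k=0$ is really needed at $k=1$, where $\lfloor k/2\rfloor=0$ and $I=\emptyset$.
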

Note that when $a = 1$, the smallest $k$ satisfying the condition in the above
Corollary~\ref{cor:set1} is the smallest $k$ satisfying $n\leq \binom{k}{\lfloor k/2\rfloor}$,
that is to say $k = s(n)$ from Definition~\ref{def:s(n)} and this
is the best (smallest) $k$ possible as stated in the said Theorem~\ref{thm:intrep}. We now 
argue that for given natural numbers $a$ and $n$, the smallest $k$ satisfying the condition 
in the above Corollary~\ref{cor:set1} is also the best possible. If $k$ satisfies the mentioned
condition in Corollary~\ref{cor:set1} we obtained directly the existence of $k$ subsets $B_1,\ldots,B_k$
with the desired properties. That the smallest such $k$ is the best possible takes a tad more work
than for the singleton case when $a=1$.

Suppose $a,k,n\in{\nats}$ are as in Corollary~\ref{cor:set1}, and that $k$ is as 
small as possible. Suppose there are $k-1$ subsets
$B_1,\ldots, B_{k-1}$ of $[n]$ such that each $i\in [n]$ is contained in some intersection 
of the $B_j$s, where the cardinality of each such intersection is at most $a$.
In this case we have a map $i\mapsto P_i$ such that $i\in P_i$ and each 
$P_i$ is an intersection of some of the sets $B_1,\ldots, B_{k-1}$ and $|P_i|\leq a$ for each
$i\in [n]$. If $P_i\subseteq P_j$ for some $i,j\in [n]$ then we can toss out $P_i$ and 
use $P_j$ instead of $P_i$. Hence, we can assume that the distinct elements $P_i$ where $i\in [n]$ 
form an antichain in $2^{[n]}$, that is we can assume that ${\cal{P}} = \{P_i : i\in [n]\}$ is
an antichain. Suppose $P,P'\in {\cal{P}}$ are distinct, so neither $P\setminus P'$ nor
$P'\setminus P$ is empty. Since each of them is an intersection of the $B_j$s, so 
$P = \bigcap_{\ell\in I}B_{\ell}$ and $P' = \bigcap_{\ell\in I'}B_{\ell}$ where $I,I'\subseteq [k-1]$,
then each of the index sets $I$ and $I'$ also are incomparable as subsets of $[k-1]$.
Since each $P\in {\cal{P}}$ is uniquely determined by its corresponding index set $I\subseteq [k-1]$,
the number of elements in ${\cal{P}}$ is at most the maximum number of an antichain in $2^{[k-1]}$
and so $|{\cal{P}}|\leq \binom{k-1}{\lfloor (k-1)/2\rfloor}$. By definition of $k$ we have
$\left\lceil n\Big/\binom{k-1}{\lfloor (k-1)/2\rfloor}\right\rceil > a$ and therefore
\[
n = \left| \bigcup_{P\in{\cal{P}}}P\right| \leq \sum_{P\in {\cal{P}}}|P| 
\leq \binom{k-1}{\lfloor (k-1)/2\rfloor}a, 
\]
contradicting the definition of $k$. We can now state the main theorem of this section.
\begin{theorem}
\label{thm:seta}
Let $a,n\in{\nats}$ be such that $a\leq n$. The smallest $k$ such there are $k$ 
subsets $B_1,\ldots,B_k$ of $[n]$ with the property that each element element $i\in [n]$
is contained in an intersection $P = \bigcap_{j\in I}B_j$ of some of the $B_j$s with
$|P|\leq a$ is equal to the smallest $k\in {\nats}$ such that
$\left\lceil n\Big/\binom{k}{\lfloor k/2\rfloor}\right\rceil \leq a$.
\end{theorem}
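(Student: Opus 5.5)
The proof splits into two parts: an upper bound by construction and a matching lower bound by an antichain count. Let $k_0$ be the smallest $k\in\nats$ with $\left\lceil n\big/\binom{k}{\lfloor k/2\rfloor}\right\rceil\leq a$; such $k$ exists because the central binomial coefficients are unbounded. Write $N_k=\binom{k}{\lfloor k/2\rfloor}$. Since $\lceil n/N_k\rceil\le a$ is equivalent to $n\le aN_k$ and $N_k$ is nondecreasing in $k$, the condition is monotone in $k$.

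\textbf{Upper bound.} We show that $k_0$ sets suffice. If $N_{k_0}\leq n$, this is exactly Corollary~\ref{cor:set1}. Partition $[n]$ into $N_{k_0}$ parts $P_S$, indexed by $S\in\binom{[k_0]}{\lfloor k_0/2\rfloor}$, each of size at most $\lceil n/N_{k_0}\rceil\le a$. Set $B_i=\bigcup_{S\ni i}P_S$. Then $\bigcap_{i\in S}B_i=P_S$, because two distinct $\lfloor k_0/2\rfloor$-subsets are incomparable, so no other part survives the intersection.

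If instead $N_{k_0}>n$, the hypothesis of Corollary~\ref{cor:set1} fails. In that case use the same construction with a weak partition in which some $P_S$ are empty; the paper's footnote explicitly permits empty parts. Alternatively, note that $k\geq s(n)$ and invoke Theorem~\ref{thm:intrep} together with Observation~\ref{obs:monoton}, which give singletons and hence $|P|=1\leq a$.

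\textbf{Lower bound.} Suppose $B_1,\dots,B_{k}$ with $k<k_0$ have the property. For each $i$, choose an intersection $P_i\ni i$ with $|P_i|\le a$. Replace each $P_i$ by a maximal member of $\{P_j\}$ containing it. The maximal members form an antichain $\mathcal P$ covering $[n]$, and each member still has size at most $a$.

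Now assign to each $P\in\mathcal P$ the index set $I_P=\{\ell : P\subseteq B_\ell\}$, the largest index set whose intersection is $P$. This choice is injective, since $\bigcap_{\ell\in I_P}B_\ell=P$. It also reverses inclusion: if $I_P\subseteq I_{P'}$, then $P'\subseteq P$. Hence incomparability of the $P$'s forces incomparability of the $I_P$'s. Sperner's Theorem then gives $|\mathcal P|\le N_{k}$, so $n\le\sum_{P\in\mathcal P}|P|\le aN_{k}$. This contradicts the minimality of $k_0$, since $k<k_0$ means $n>aN_k$.

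The main obstacle is the index-set argument in the lower bound. A set $P$ may be written as an intersection in several ways, and arbitrary representations need not be incomparable. Fixing the canonical maximal index set $I_P$ resolves this. A secondary point is the edge case $N_{k_0}>n$ in the upper bound, which the weak-partition convention handles.
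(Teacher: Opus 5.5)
Your proof is correct and essentially follows the paper's argument. The upper bound uses the weak-partition construction behind Corollary~\ref{cor:set1}; the lower bound uses the antichain of maximal intersections, incomparable index sets, Sperner's Theorem, and the count $n\le a\binom{k}{\lfloor k/2\rfloor}$. Your explicit treatment of the case $\binom{k_0}{\lfloor k_0/2\rfloor}>n$ is a useful tidy-up, since the hypothesis of Corollary~\ref{cor:set1} formally excludes it. The canonical choice of $I_P$ is harmless but unnecessary: your own inclusion-reversal argument already shows that any index sets chosen for distinct members of the antichain are incomparable.
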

By Corollary~\ref{cor:set1} and the above Theorem~\ref{thm:seta} we obtain the
following approximation version.
\begin{corollary}
\label{cor:set2}
Let $n\in{\nats}$ and $\alpha\in ]0,1[$ be a real number. 
The smallest $k$ such there are $k$ 
subsets $B_1,\ldots,B_k$ of $[n]$ with the property that each element element $i\in [n]$
is contained in an intersection $P = \bigcap_{j\in I}B_j$ of some of the $B_j$s with
$|P|/n\leq \alpha$, is equal to the smallest $k\in {\nats}$ such that
$\left\lceil n\Big/\binom{k}{\lfloor k/2\rfloor}\right\rceil \leq \alpha n$.
\end{corollary}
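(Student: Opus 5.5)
The plan is to read Corollary~\ref{cor:set2} as Theorem~\ref{thm:seta} with the integer accuracy parameter taken to be $a = \lfloor \alpha n\rfloor$. The first step is a reformulation. For a finite set $P\subseteq[n]$, the condition $|P|/n\leq\alpha$ is equivalent to $|P|\leq \alpha n$. Since $|P|$ is an integer, this is in turn equivalent to $|P|\leq \lfloor\alpha n\rfloor$. In the same way, for any $k$ the quantity $\left\lceil n\Big/\binom{k}{\lfloor k/2\rfloor}\right\rceil$ is an integer, so it is at most $\alpha n$ exactly when it is at most $\lfloor \alpha n\rfloor$. Hence both the covering property in the corollary and the numerical threshold condition coincide with those of Theorem~\ref{thm:seta} for $a=\lfloor\alpha n\rfloor$.

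The second step is to verify the hypotheses of Theorem~\ref{thm:seta}, namely $a\in\nats$ and $a\leq n$. Since $\alpha<1$, we get $a\leq \alpha n< n$. The delicate case is $\alpha n<1$, where $a=0\notin\nats$, so I would treat it separately. In that case no nonempty $P$ satisfies $|P|\leq\alpha n$, while every admissible $P$ must contain some $i$. Therefore no family $B_1,\ldots,B_k$ has the required property for any $k$. On the numerical side, $\lceil n/\binom{k}{\lfloor k/2\rfloor}\rceil\geq 1>\alpha n$ for every $k$, so no $k$ satisfies the threshold either. Both minima are thus over the empty set, and the statement holds vacuously, or should be read with the implicit assumption $\alpha\geq 1/n$; I would remark on this explicitly.

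The third step handles $\alpha n\geq 1$. Then $1\leq a\leq n$, and Theorem~\ref{thm:seta} applies directly. By the equivalences of the first step, the minimal $k$ in the corollary equals the minimal $k$ with $\lceil n/\binom{k}{\lfloor k/2\rfloor}\rceil\leq a$, which is the minimal $k$ with $\lceil n/\binom{k}{\lfloor k/2\rfloor}\rceil\leq\alpha n$.

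One point needs checking. Corollary~\ref{cor:set1}, and hence the existence part of Theorem~\ref{thm:seta}, was stated under the side assumption $n\geq\binom{k}{\lfloor k/2\rfloor}$. I would confirm that the minimal $k$ never violates it. If $\binom{k}{\lfloor k/2\rfloor}>n$, then already $k-1$ satisfies $\binom{k-1}{\lfloor (k-1)/2\rfloor}\geq n$, unless $k$ is tiny. In that case the ceiling equals $1\leq a$, so a smaller $k$ works. In any event, one can pad the partition with empty parts, which the weak-partition convention allows. This side-condition bookkeeping, together with the $a=0$ edge case, is the only real obstacle; the rest is immediate from Theorem~\ref{thm:seta}.
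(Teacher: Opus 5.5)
Your proposal is correct and follows the paper's route: the paper also obtains Corollary~\ref{cor:set2} by specializing Theorem~\ref{thm:seta} (with Corollary~\ref{cor:set1}) to accuracy $\alpha n$. Your reduction to $a=\lfloor\alpha n\rfloor$ via integrality, and your separate treatment of the vacuous case $\alpha n<1$, simply make that specialization precise. One correction: your claim that the minimal $k$ never violates $n\geq\binom{k}{\lfloor k/2\rfloor}$ is false when $a=1$. For example, $n=7$ and $\alpha=1/7$ give minimal $k=s(7)=5$, with $\binom{5}{2}=10>7$. It is therefore your padding-with-empty-parts remark, not that claim, that disposes of the side condition.
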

\begin{example}
\label{exa:23and4}
Suppose $n = 21$ and $\alpha = 1/5$ or $20$ percent. We want to determine 
the smallest number $k$ of subsets $B_1,\ldots,B_k$ of $[n] = \{1,2,3,\ldots,21\}$ such
that each integer in $\{1,2,3,\ldots,21\}$ is contained in some intersection 
$P$ of the sets $B_1\ldots,B_k$ where $|P|/21 \leq \frac{1}{5}$.
By the above Corollary~\ref{cor:set2} we obtain for $k=4$ that 
\[
\left\lceil n\Big/\binom{k}{\lfloor k/2\rfloor}\right\rceil = \left\lceil 21\Big/\binom{4}{2}\right\rceil 
= \lceil21/6\rceil = 4 \leq \frac{1}{5}\cdot21 = \alpha n
\]
and $k=4$ is the smallest such natural number as this does not hold for $k=3$. Hence we 
can choose any partition of $[21]$ into $\binom{k}{\lfloor k/2\rfloor} = \binom{4}{2} = 6$ parts
where each part has at most $\lceil 21/6\rceil = 4$ elements. Any such partition will do for our
purposes, for example
\[
\begin{array}{lll}
    P_1 = \{1,2,3,4\}, & P_2 = \{5,6,7,8\}, & P_3 = \{9,10,11,12\} \\
    P_4 = \{13,14,15,16\}, & P_5 = \{17,18,19\}, & P_6 = \{20,21\}.   
\end{array}
\]
Akin to Example~\ref{exa:n=6} we now obtain the sets $B_i$s from this partition 
as the $A_i$ were obtained from the singleton sets
\begin{eqnarray*}
B_1 & = & P_1\cup P_2\cup P_3 = \{1,2,3,4,5,6,7,8,9,10,11,12\}, \\
B_2 & = & P_1\cup P_4\cup P_5 = \{1,2,3,4,13,14,15,16,17,18,19\}, \\
B_3 & = & P_2\cup P_4\cup P_6 = \{5,6,7,8,13,14,15,16,20,21\}, \\
B_4 & = & P_3\cup P_5\cup P_6 = \{9,10,11,12,17,18,19,20,21\}, \\
\end{eqnarray*}
and the representation
\begin{eqnarray*}
  P_1 = B_1\cap B_2, & P_2 = B_1\cap B_3, & P_3= B_1\cap B_4, \\
  P_4 = B_2\cap B_3, & P_5 = B_2\cap B_4, & P_6 = B_3\cap B_4, 
\end{eqnarray*}
see Figure~\ref{fig:K4-part}.
\begin{figure}[h!]
    \centering
    \begin{tikzpicture}[every  node/.style={circle,draw}]
\node (A) at (-2,2) {$B_1$};
\node (B) at (2,2) {$B_2$};
\node (C) at (2,-2) {$B_3$};
\node (D) at (-2,-2) {$B_4$};

        \draw [-] (A) edge node[circle=none,draw=none,midway, inner sep=1pt, above] {$P_1$} (B) (A) edge node[circle=none,draw=none,pos=0.25, inner sep=2pt, above] {$P_2$} (C) (A) edge node[circle=none,draw=none,midway, inner sep=1pt, right] {$P_3$} (D)  (B) edge node[circle=none,draw=none, inner sep=1pt,midway, right] {$P_4$} (C) (B) edge node[circle=none,draw=none, inner sep=2pt,pos=0.25, above] {$P_5$} (D) (C) edge node[circle=none,draw=none, inner sep=1pt,midway, above] {$P_6$}(D) ;
    \end{tikzpicture}
    \caption{An application of the case for $n=6$ using $K_4$ with six edges.}
    \label{fig:K4-part}
\end{figure}
In this way each $i\in \{1,2,3,\ldots,21\}$ is contained in some
intersection of sets $B_1,B_2,B_3,B_4$ where each of these intersections
$P$ has cardinality at most $4 \leq \frac{1}{5}\cdot21=\alpha n$. Therefore
by the use of these four subsets we can pinpoint each element in $\{1,2,3,\ldots,21\}$
with an accuracy of a least $20$ percent. 
\end{example}
\begin{example}
\label{exa:101and5}
For a more elaborate example, let $n = 101$ and $a = 5$. We want to determine 
the smallest number $k$ of subsets $B_1,\ldots,B_k$ of $[n] = \{1,2,3,\ldots,101\}$ such
that each integer in $[n] = [101] = \{1,2,3,\ldots,101\}$ is contained in some intersection 
$P$ of the sets $B_1\ldots,B_k$ where $|P|\leq 5$.
By the above Corollary~\ref{cor:set2} we obtain for $k=7$ that 
\[
\left\lceil n\Big/\binom{k}{\lfloor k/2\rfloor}\right\rceil = \left\lceil 101\Big/\binom{7}{3}\right\rceil 
= \lceil101/35\rceil = 3 \leq 5
\]
and $k=7$ is the smallest such natural number, since for $k=6$ we have
\[
\left\lceil n\Big/\binom{k}{\lfloor k/2\rfloor}\right\rceil = \left\lceil 101\Big/\binom{6}{3}\right\rceil 
= \lceil101/20\rceil = 6 > 5.
\]
So, we can choose any partition of $[101]$ into $\binom{k}{\lfloor k/2\rfloor} = \binom{7}{3} = 35$ parts
where each part has at most $\lceil 101/35\rceil = 3$ elements. Any such partition 
will do for our purposes. For example
\[
P_1 = \{1,2,3\}, \ \ P_2 = \{4,5,6\},\ \ldots, \ \ P_{33} = \{97,98,99\}, \ \ P_{34} = \{100,101\}, \ \ P_{35} = \emptyset.
\]
Unlike the above Example~\ref{exa:23and4} we now have $34$ nonempty 
parts where $34 \neq \binom{k}{\lfloor k/2\rfloor}$ 
for any integer $k$. For $k=7$ we can label all the 
$\binom{k}{\lfloor k/2\rfloor} = \binom{7}{3} = 35$ element sets of $\binom{[7]}{3}$ lexicographically
\begin{equation}
\label{eqn:7choose3-list}
\begin{array}{l}
  \{1,2,3\} < \{1,2,4\} < \{1,2,5\} < \{1,2,6\} < \{1,2,7\} < \{1,3,4\} < \{1,3,5\} < \\ 
  \{1,3,6\} < \{1,3,7\} < \{1,4,5\} < \{1,4,6\} < \{1,4,7\} < \{1,5,6\} < \{1,5,7\} < \\
  \{1,6,7\} < \{2,3,4\} < \{2,3,5\} < \{2,3,6\} < \{2,3,7\} < \{2,4,5\} < \{2,4,6\} < \\
  \{2,4,7\} < \{2,5,6\} < \{2,5,7\} < \{2,6,7\} < \{3,4,5\} < \{3,4,6\} < \{3,4,7\} < \\
  \{3,5,6\} < \{3,5,7\} < \{3,6,7\} < \{4,5,6\} < \{4,5,7\} < \{4,6,7\} < \{5,6,7\}
\end{array}
\end{equation}
and we can identify each set in $\binom{[7]}{3}$ by its order in this listing.
This is equivalent to the labeling of the triangles of the complete graph $K_7$ on the
vertices $\{B_1,\ldots,B_7\}$ by the labels $1,2,\ldots, 35$ such that the
triangle $\{1,2,3\}$ is labeled by $1$, the triangle $\{1,2,4\}$ is labeled by $2$
etc.~and the final triangle $\{5,6,7\}$ is labeled by $35$, see Figure~\ref{fig:K7-part}
where two triangles of $K_7$ are shown: one with vertices $B_1,B_3$ and $B_5$ that has label $7$
(since $\{1,3,5\}$ is the $7$-th $3$-set in $\binom{[7]}{3}$ in the above lexicographical ordering,
in~Figure~\ref{fig:K7-part})
and the last triangle with vertices $B_5,B_6$ and $B_7$ that has label $35$.
\begin{figure}[h!]
    \centering
    \begin{tikzpicture}[every  node/.style={circle,draw,fill=white}]
      \fill[gray!30] (0,4.5) -- (4.387176,-1.0013445) -- (-1.952478,-4.0543605)  -- cycle;
      node/.style={circle,draw,fill=white}]
      \fill[gray!30] (-3.5182395,2.805705) -- (-1.952478,-4.0543605) -- (-4.387176,-1.0013445)  -- cycle;
\node (A) at (0,4.5) {$B_1$};
\node (B) at (3.5182395,2.805705) {$B_2$};
\node (C) at (4.387176,-1.0013445) {$B_3$};
\node (D) at (1.952478,-4.0543605) {$B_4$};
\node (E) at (-1.952478,-4.0543605) {$B_5$};
\node (F) at (-4.387176,-1.0013445) {$B_6$};
\node (G) at (-3.5182395,2.805705) {$B_7$};

\node[circle=none,draw=none,fill=none,font=\Large] (H) at ({1.5*sqrt(7)/6},{1.5*(-cos(pi/7)/3)}) {$P_7$};
\node[circle=none,draw=none,fill=none,font=\Large] (M) at (-3.2859645,-0.75) {$P_{35}$};

        \draw [-] (A) edge (B) (A) edge (C) (A) edge (G) (A) edge (E) (B) edge (C) (C) edge (D) (C) edge (E) (D) edge (E) (E) edge (F) (E) edge (G) (F) edge (G);
    \end{tikzpicture}
    \caption{Each triangle represents a part of the partition.}
    \label{fig:K7-part}
\end{figure}
In this way we obtain the sets $B_1,\ldots, B_7$ where each $B_i$ consists of
the union of all the $\binom{7-1}{3-1} = \binom{6}{2} = 15$ sets $P_j$ where each $j$ corresponds
to a set in $\binom{[7]}{3}$ that contains $i$ which we can see from (\ref{eqn:7choose3-list}).
Recall that $P_{35} = \emptyset$ in the following unions. By scanning the list in 
(\ref{eqn:7choose3-list}) we obtain 
\begin{eqnarray*}
B_1 & = & P_1\cup P_2\cup P_3\cup P_4\cup P_5\cup P_6\cup P_7\cup P_8\cup P_9\cup P_{10}
  \cup P_{11}\cup P_{12}\cup P_{13}\cup P_{14}\cup P_{15} \\
B_2 & = & P_1\cup P_2\cup P_3\cup P_4\cup P_5\cup P_{16}\cup P_{17}\cup P_{18}\cup P_{19}\cup P_{20}
  \cup P_{21}\cup P_{22}\cup P_{23}\cup P_{24}\cup P_{25} \\
B_3 & = & P_1\cup P_6\cup P_7\cup P_8\cup P_9\cup P_{16}\cup P_{17}\cup P_{18}\cup P_{19}\cup P_{26}
  \cup P_{27}\cup P_{28}\cup P_{29}\cup P_{30}\cup P_{31} \\
B_4 & = & P_2\cup P_6\cup P_{10}\cup P_{11}\cup P_{12}\cup P_{16}\cup P_{20}\cup P_{21}\cup P_{22}\cup P_{26}
  \cup P_{27}\cup P_{28}\cup P_{32}\cup P_{33}\cup P_{34} \\ 
B_5 & = & P_3\cup P_7\cup P_{10}\cup P_{13}\cup P_{14}\cup P_{17}\cup P_{20}\cup P_{23}\cup P_{24}\cup P_{26}
  \cup P_{29}\cup P_{30}\cup P_{32}\cup P_{33}\cup P_{35} \\
B_6 & = & P_4\cup P_8\cup P_{11}\cup P_{13}\cup P_{15}\cup P_{18}\cup P_{21}\cup P_{23}\cup P_{25}\cup P_{27}
  \cup P_{29}\cup P_{31}\cup P_{32}\cup P_{34}\cup P_{35} \\
B_7& = & P_5\cup P_9\cup P_{12}\cup P_{14}\cup P_{15}\cup P_{19}\cup P_{22}\cup P_{24}\cup P_{25}\cup P_{28}
  \cup P_{30}\cup P_{31}\cup P_{33}\cup P_{34}\cup P_{35}. 
\end{eqnarray*}
Since $[101] = P_1\cup\cdots\cup P_{35}$ is a partition we obtain the representation of each $P_i$ as
the intersection of all the $B_j$ that contain the part $P_i$ as follows. This can be seen from the above 
display and also directly from listing in (\ref{eqn:7choose3-list}).
\[
\begin{array}{llll}
P_1 = B_1 \cap B_2\cap B_3, \ \ & 
P_2 = B_1 \cap B_2 \cap B_4 , \ \ &  
P_3 = B_1 \cap B_2 \cap B_5 , \ \ & 
P_4 = B_1 \cap B_2 \cap B_6 , \\
P_5 = B_1 \cap B_2 \cap B_7 , \ \ &
P_6 = B_1 \cap B_3 \cap B_4 , \ \ &
P_7 = B_1 \cap B_3 \cap B_5 , \ \ &
P_8 = B_1 \cap B_3 \cap B_6 , \\
P_9 = B_1 \cap B_3 \cap B_7 , \ \ &
P_{10} = B_1 \cap B_4 \cap B_5 , \ \ &
P_{11} = B_1 \cap B_4 \cap B_6 , \ \ &
P_{12} = B_1 \cap B_4 \cap B_7 , \\
P_{13} = B_1 \cap B_5 \cap B_6 , \ \ &
P_{14} = B_1 \cap B_5 \cap B_7 , \ \ &
P_{15} = B_1 \cap B_6 \cap B_7 , \ \ &
P_{16} = B_2 \cap B_3 \cap B_4 , \\
P_{17} = B_2 \cap B_3 \cap B_5 , \ \ &
P_{18} = B_2 \cap B_3 \cap B_6 , \ \ &
P_{19} = B_2 \cap B_3 \cap B_7 , \ \ &
P_{20} = B_2 \cap B_4 \cap B_5 , \\
P_{21} = B_2 \cap B_4 \cap B_6 , \ \ &
P_{22} = B_2 \cap B_4 \cap B_7 , \ \ &
P_{23} = B_2 \cap B_5 \cap B_6 , \ \ &
P_{24} = B_2 \cap B_5 \cap B_7 , \\
P_{25} = B_2 \cap B_6 \cap B_7 , \ \ &
P_{26} = B_3 \cap B_4 \cap B_5 , \ \ &
P_{27} = B_3 \cap B_4 \cap B_6 , \ \ &
P_{28} = B_3 \cap B_4 \cap B_7 , \\
P_{29} = B_3 \cap B_5 \cap B_6 , \ \ &
P_{30} = B_3 \cap B_5 \cap B_7 , \ \ &
P_{31} = B_3 \cap B_6 \cap B_7 , \ \ &
P_{32} = B_4 \cap B_5 \cap B_6 , \\
P_{33} = B_4 \cap B_5 \cap B_7 , \ \ &
P_{34} = B_4 \cap B_6 \cap B_7 , \ \ &
P_{35} = B_5 \cap B_6 \cap B_7  = \emptyset.
\end{array}
\]
In this way each nonempty $P_i$ for $i\in \{1,2,3,\ldots,34\}$ is contained in some
intersection of three of the sets $B_1,\ldots,B_7$ where each of these intersections
$P_j$ has cardinality at most $3$. Note that by insisting that we want an accuracy of
$a = 5$ we actually obtain an accuracy of $a = 3 < 5$ using the minimum number $k = 7$
of sets to form the intersection. As we saw, using $k = 6$ will only guarantee an accuracy
of $a = 6 > 5$. Hence, insisting on the accuracy of $a = 5$ is then equivalent to insisting on
an accuracy of $a = 3,4$ or $5$ which is achieved using $k = 7$ subsets of $[101]$.
\end{example}

\section{A formula for $s(n)$ for large $n$} 
\label{sec:formula}

In this section we will use a variation of the classical Stirling approximation formula 
and the Lambert $W$ function to obtain a closed formula for a function which is equal to either
$s(n)$ or $s(n) - 1$ for all $n$ that are large enough.

Recall that the Lambert $W$ function~\cite{Lambert-wiki}, \cite{Lambert-Wolfram} is the multivalued 
inverse of the function $w \mapsto we^w$ in the complex plane. When restricting to the real number 
line, there are two branches of the Lambert $W$ function: the principal branch 
$W_0 : [-1/e, \infty[ \longrightarrow [-1,\infty[$ and the second branch 
$W_{-1} : [-1/e,0[ \longrightarrow ]-\infty,-1]$. Both branches are analytic and bijective with $W_0$ 
strictly increasing and $W_{-1}$ strictly decreasing. The second branch $W_{-1}$ of the 
Lambert $W$ function has been been used significantly less than the main branch $W_0$, 
see Figure~\ref{fig:2nd-branch}.
\begin{figure}[h!]
\begin{center}  
\begin{tikzpicture}
\begin{axis}[
    x=1cm, y=1cm,
    xtick distance=2,
    ytick distance=2, 
    ytick pos=right,
    xticklabel pos=right,
    xmin=-1.5, xmax=5.5,
    ymin=-3.5, ymax=3,
    axis lines=middle,
        grid=both,
    xlabel={$x$}, ylabel={$y$},
    clip=true,
    legend style={
        at={(0.98,0.02)},
        anchor=south east,
    },
]

\addplot[
    thick,
    dashed,
    domain=-1:2,
    samples=400,
    variable=\t,
]
({\t*exp(\t)},{\t});
\addlegendentry{$W_0(x)$}

\addplot[
    thick,
    solid,
    domain=-4:-1,
    samples=400,
    variable=\t,
]
({\t*exp(\t)},{\t});
\addlegendentry{$W_{-1}(x)$}

\end{axis}
\end{tikzpicture}
\end{center}
\caption{The two branches of the Lambert $W$ function when restricted to the reals.}
    \label{fig:2nd-branch}.
\end{figure}
Consider the function $y = y(x) = \frac{e^x}{x}$ for real $x\geq 1$. Here 
$y : [1,\infty[ \longrightarrow [e,\infty[$ is analytic, strictly increasing and bijective. 
Letting $z := -x$ we obtain $y = \frac{e^{-z}}{-z}$ and so $-y^{-1} = ze^z$. Since 
$z\in ]-\infty, -1]$ we have $-y^{-1}\in [-1/e,0[$ and hence, by the definition of the
second branch of the Lambert $W$ function, we have $z = W_{-1}(-y^{-1})$ and therefore 
the following lemma.
\begin{lemma}
\label{lmm:Lambert-1}
 The inverse of the function $y : [1,\infty[ \longrightarrow [e,\infty[$ presented by $y(x) = \frac{e^x}{x}$ 
 is given by $x = -W_{-1}(-y^{-1})$ where $W_{-1} : [-1/e,0[ \longrightarrow ]-\infty,-1]$ is the second
 branch of the Lambert $W$ function when restricted to the real number line.
\end{lemma}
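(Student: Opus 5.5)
The plan is to verify directly that $g(y) := -W_{-1}(-y^{-1})$ is a two-sided inverse of $y(x) = e^x/x$ on the stated domains. The only input is the defining property of the second real branch: for $t\in[-1/e,0[$, $W_{-1}(t)$ is the unique $w\in\,]-\infty,-1]$ with $we^w = t$. Equivalently, $w\mapsto we^w$ restricted to $]-\infty,-1]$ is a bijection onto $[-1/e,0[$ whose inverse is $W_{-1}$.

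First I will check that $y$ maps $[1,\infty[$ bijectively onto $[e,\infty[$. The derivative $y'(x) = e^x(x-1)/x^2$ is positive for $x>1$, so $y$ is strictly increasing. We have $y(1)=e$, $y(x)\to\infty$, and $y$ is continuous, so it is onto $[e,\infty[$. Hence $y^{-1}$ exists as a function, and it suffices to show that $g(y(x)) = x$ for every $x\ge 1$.

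Fix $x\ge1$ and set $z=-x\in\,]-\infty,-1]$. Then
\[
-\frac{1}{y(x)} = -\frac{x}{e^x} = z e^{z},
\]
and $z e^z\in[-1/e,0[$ because $y(x)\in[e,\infty[$. Since $z$ lies in the range $]-\infty,-1]$ of $W_{-1}$ and satisfies $ze^z = -1/y(x)$, the uniqueness in the definition of $W_{-1}$ gives $z = W_{-1}(-1/y(x))$. Therefore $x = -W_{-1}(-1/y(x))$. Combined with the bijectivity of $y$, this shows that $g$ is the inverse of $y$.

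The only point needing care is the branch selection: the principal branch $W_0$ would return the other solution of $we^w = -1/y$, which lies in $[-1,0[$. It is excluded precisely because $z=-x\le -1$, so $W_{-1}$ is the correct branch. I expect no real obstacle beyond keeping these domain and range checks explicit, including the boundary case $x=1$, $y=e$, where $W_{-1}(-1/e)=-1$.
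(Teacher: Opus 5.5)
Your proof is correct and follows the same route as the paper: substitute $z=-x$ so that $-y^{-1}=ze^{z}$ with $z\in\,]-\infty,-1]$, and invoke the defining property of $W_{-1}$ on that range. You additionally verify that $y$ is a bijection via $y'(x)=e^x(x-1)/x^2$, and you make the uniqueness and branch-selection step explicit; the paper leaves both of these implicit.
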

The following variation, and a slight tightening, of the classical Stirling's approximation formula is due to 
Robbins~\cite{Stirling-Robbins} and is valid for all integers $n\geq 1$:
\[
\sqrt{2\pi n}\left(\frac{n}{e}\right)^ne^{\frac{1}{12n+1}}
<n! <
\sqrt{2\pi n}\left(\frac{n}{e}\right)^ne^{\frac{1}{12n}}.
\]
As a direct consequence, we obtain that for all integers $n\geq 1$ we have 
\begin{equation}
\label{eqn:Robbins}
n! = \sqrt{2\pi n}\left(\frac{n}{e}\right)^ne^{\theta_n},
\end{equation}
where $\frac{1}{12n+1} < \theta_n < \frac{1}{12n}$.
By (\ref{eqn:Robbins}) we then obtain that 
\begin{equation}
\label{eqn:ab}
\binom{n}{\lfloor n/2\rfloor}  = \frac{n!}{\lfloor n/2\rfloor!\lceil n/2\rceil!}
= \sqrt{\frac{n}{2\pi ab}}\cdot\frac{n^n}{a^ab^b}\cdot\phi_n
\end{equation}
where $a = \lfloor n/2\rfloor$ and $b = \lceil n/2\rceil$, and where 
$0< \phi_n < 1$ and $\phi_n\rightarrow 1$ as $n$ tends to infinity.
Since the function 
$c(x) = (x + 1/2)\log x$ is concave up for $x\geq 1$ we have in particular that
$\frac{c(x) + c(y)}{2} > c\left(\frac{x+y}{2}\right)$ for distinct $s$ and $y$
and hence 
\begin{equation}
\label{eqn:xy}
\sqrt{xy}x^xy^y > \left(\frac{x+y}{2}\right)^{x+y+1}.
\end{equation}
Since $a + b = n$ we obtain from (\ref{eqn:ab}) and (\ref{eqn:xy}) that 
$\binom{n}{\lfloor n/2\rfloor} < \sqrt{\frac{2}{\pi}}\frac{2^n}{\sqrt{n}}$ for all
$n\geq 1$.

Note that for even $n$ we obtain from (\ref{eqn:Robbins}) that 
\begin{equation}
\label{eqn:even}
\binom{n}{\lfloor n/2\rfloor} = \sqrt{\frac{2}{\pi}}\cdot\frac{2^n}{\sqrt{n}}\cdot\phi_n.
\end{equation}
For $n = 2k+1$ odd we similarly obtain from (\ref{eqn:Robbins}) that
\begin{equation}
\label{eqn:odd}
\binom{n}{\lfloor n/2\rfloor} 
= \sqrt{\frac{2}{\pi}}\cdot\frac{2^n}{\sqrt{n}}\cdot\phi_n
\cdot\frac{k+1/2}{\sqrt{k(k+1)}}\left(1 + \frac{1}{2k}\right)^k\left(1 - \frac{1}{2(k+1)}\right)^{k+1}.
\end{equation}
Since $(1 + \frac{r}{k})^k\rightarrow e^{r}$ as $k$ tends to infinity we see from (\ref{eqn:odd})
and (\ref{eqn:even}) that 
$\binom{n}{\lfloor n/2\rfloor} \sim \sqrt{\frac{2}{\pi}}\cdot\frac{2^n}{\sqrt{n}}$ for all 
$n$, even and odd. We summarize this in the following.
\begin{observation}
\label{obs:sim-less}
As $n$ tends to infinity we have that 
\[
\binom{n}{\lfloor n/2\rfloor} \sim \sqrt{\frac{2}{\pi}}\frac{2^n}{\sqrt{n}}.
\]
Further, for all $n\geq 1$ we have that 
\[
\binom{n}{\lfloor n/2\rfloor} < \sqrt{\frac{2}{\pi}}\frac{2^n}{\sqrt{n}}.
\]
\end{observation}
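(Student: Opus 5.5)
The plan is to treat the two claims of Observation~\ref{obs:sim-less} separately, starting from the Robbins form (\ref{eqn:Robbins}) and the resulting expression (\ref{eqn:ab}). Write $a=\lfloor n/2\rfloor$ and $b=\lceil n/2\rceil$. Applying (\ref{eqn:Robbins}) to $n!$, $a!$ and $b!$ gives
\[
\binom{n}{a}=\sqrt{\frac{n}{2\pi ab}}\cdot\frac{n^n}{a^ab^b}\cdot e^{\theta_n-\theta_a-\theta_b}.
\]
So in (\ref{eqn:ab}) we have $\phi_n=e^{\theta_n-\theta_a-\theta_b}$.

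\textbf{Checking $0<\phi_n<1$.} Since $a,b\le n$, the bounds on $\theta$ give $\theta_n<\frac{1}{12n}$ and $\theta_a>\frac{1}{12a+1}\ge\frac{1}{12n+1}$. This shows $\theta_n-\theta_a<\frac{1}{12n}-\frac{1}{12n+1}$. Together with $\theta_b>\frac{1}{12b+1}\ge\frac{1}{6n+1}$, which exceeds that tiny difference, we get $\theta_n-\theta_a-\theta_b<0$. Hence $0<\phi_n<1$. All three $\theta$'s tend to $0$, so $\phi_n\to 1$.

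\textbf{The strict inequality.} In the even case $a=b=n/2$, the factor $\sqrt{n/(2\pi ab)}\,n^n/(a^ab^b)$ equals $\sqrt{2/\pi}\,2^n/\sqrt n$ exactly, which is (\ref{eqn:even}). Then $\phi_n<1$ gives the strict bound. In the odd case $a\neq b$, so (\ref{eqn:xy}) applies with $x=a$, $y=b$:
\[
\sqrt{ab}\,a^ab^b>\left(\tfrac n2\right)^{n+1}.
\]
It follows that $\sqrt{n/(2\pi)}\,n^n/(\sqrt{ab}\,a^ab^b)<\sqrt{n/(2\pi)}\,2^{n+1}/n=\sqrt{2/\pi}\,2^n/\sqrt n$, and again $\phi_n<1$ finishes. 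Verifying (\ref{eqn:xy}) is routine: take logarithms and use strict convexity of $c(x)=(x+\tfrac12)\log x$. The second derivative is $c''(x)=\frac1x-\frac{1}{2x^2}>0$ for $x\ge1$. The case $a=0$ (that is, $n=1$) is checked directly: $1<\sqrt{2/\pi}\cdot 2$.

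\textbf{The asymptotic.} For even $n$ it is immediate from (\ref{eqn:even}) and $\phi_n\to1$. For odd $n=2k+1$, substitute $a=k$, $b=k+1$ and factor out $2^n$ to get (\ref{eqn:odd}). Then I show the correction factor tends to $1$:
\[
\frac{k+1/2}{\sqrt{k(k+1)}}\to1,\qquad \left(1+\frac1{2k}\right)^k\to e^{1/2},\qquad \left(1-\frac1{2(k+1)}\right)^{k+1}\to e^{-1/2}.
\]
The product of these limits is $1$.

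The main obstacle is the bookkeeping that gives exactly (\ref{eqn:odd}) from the odd-case algebra. For this I would write $n^n/(k^k(k+1)^{k+1})=2^n\,(k+\tfrac12)^{2k+1}/(k^k(k+1)^{k+1})$ and split $(k+\tfrac12)^{2k+1}=(k+\tfrac12)\,(k+\tfrac12)^k(k+\tfrac12)^{k+1}$. This also requires care with the square-root prefactor $\sqrt{n/(2\pi ab)}$ versus $\sqrt{2/(\pi n)}$. Everything else is a direct limit computation.
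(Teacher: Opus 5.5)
Your proposal is correct and follows the same route as the paper---Robbins' bounds yielding (\ref{eqn:ab}) with $0<\phi_n<1$, convexity of $(x+\frac12)\log x$ via (\ref{eqn:xy}) for the strict bound, and (\ref{eqn:even}), (\ref{eqn:odd}) for the asymptotic. You are also more careful than the paper in three places: you actually verify $\phi_n<1$, you separate the even case (where (\ref{eqn:xy}) becomes an equality and strictness comes from $\phi_n<1$), and you treat $n=1$ directly (where $a=0$ and (\ref{eqn:ab}) is undefined).

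Two harmless arithmetic slips need fixing:
\begin{enumerate}
\item For odd $n$ you have $\frac{1}{12b+1}=\frac{1}{6n+7}$, which is not $\ge\frac{1}{6n+1}$. The argument survives because $\frac{1}{6n+7}$ still exceeds $\frac{1}{12n}-\frac{1}{12n+1}=\frac{1}{12n(12n+1)}$.
\item Your split of $(k+\frac12)^{2k+1}$ has one factor too many; the correct split is $(k+\frac12)^k(k+\frac12)^{k+1}$. The factor $\frac{k+1/2}{\sqrt{k(k+1)}}$ in (\ref{eqn:odd}) comes entirely from the prefactor ratio $\sqrt{\frac{n}{2\pi k(k+1)}}\Big/\sqrt{\frac{2}{\pi n}}=\frac{2k+1}{2\sqrt{k(k+1)}}$.
\end{enumerate}
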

By Observation~\ref{obs:sim-less} we have for any $\epsilon > 0$ that 
\begin{equation}
\label{eqn:between}
\left(\sqrt{\frac{2}{\pi}} - \epsilon\right)\frac{2^n}{\sqrt{n}} < 
\binom{n}{\lfloor n/2\rfloor} < \sqrt{\frac{2}{\pi}}\frac{2^n}{\sqrt{n}}
\end{equation}
for all $n\geq N_{\epsilon}$ where $N_{\epsilon}\in{\nats}$ is some fixed natural number
depending only on $\epsilon$.

For a real $\alpha > 0$ let $g_{\alpha}(x) = \alpha\frac{2^x}{\sqrt{x}}$ for all real $x\geq 1$.
The function $g_{\alpha} : [1,\infty[ \longrightarrow [2\alpha, \infty[$ is clearly 
strictly increasing and bijective. 
By Lemma~\ref{lmm:Lambert-1} the inverse $g^{-1}_{\alpha}$ is given by 
\begin{equation}
\label{eqn:g-alpha-1}
g^{-1}_{\alpha}(x) = \frac{-1}{2\log2}W_{-1}\left(\frac{-2\alpha^2\log2}{x^2}\right).
\end{equation}
If $f : {\reals}_{+}\rightarrow {\reals}_{+}$ is an 
analytic, bijective and increasing function with $f(n) = \binom{n}{\lfloor n/2\rfloor}$ for each 
natural number $n$, then $s(n)$ from Definition~\ref{def:s(n)} is given by 
$s(n) = \lceil f^{-1}(n)\rceil$. By (\ref{eqn:between}) and (\ref{eqn:g-alpha-1}) we obtain 
the following.
\begin{theorem}
\label{thm:s-between}
For each $\epsilon > 0$ there is an integer $N_{\epsilon}$ such that 
\[
\left\lceil\frac{-1}{2\log2}W_{-1}\left(\frac{-4\log2}{\pi n^2}\right)\right\rceil
\leq s(n) \leq 
\left\lceil\frac{-1}{2\log2}W_{-1}\left(\frac{-2\alpha^2\log2}{n^2}\right)\right\rceil
\]
for all $n\geq N_{\epsilon}$, where $\alpha = \sqrt{\frac{2}{\pi}} - \epsilon$ and $W_{-1}$ is the second 
branch of the Lambert $W$ function.
\end{theorem}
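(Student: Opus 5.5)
The plan is to sandwich $\binom{k}{\lfloor k/2\rfloor}$ between $g_\alpha(k)$ and $g_\beta(k)$, where $\beta=\sqrt{2/\pi}$ and $\alpha=\sqrt{2/\pi}-\epsilon$, using (\ref{eqn:between}). Then translate the defining inequality of $s(n)$ through the monotone inverses $g_\alpha^{-1}$ and $g_\beta^{-1}$, which are given explicitly by (\ref{eqn:g-alpha-1}). Note that $g^{-1}_{\beta}(n)=\frac{-1}{2\log 2}W_{-1}\!\left(\frac{-4\log 2}{\pi n^2}\right)$, since $2\beta^2\log 2 = \frac{4\log 2}{\pi}$. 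So the claimed bounds read $\lceil g_\beta^{-1}(n)\rceil \le s(n)\le \lceil g_\alpha^{-1}(n)\rceil$.

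\emph{Lower bound.} Let $k=s(n)$, so $\binom{k}{\lfloor k/2\rfloor}\ge n$ by Definition~\ref{def:s(n)}. By the second part of Observation~\ref{obs:sim-less}, which holds for all $k\ge1$, we get $g_\beta(k)>\binom{k}{\lfloor k/2\rfloor}\ge n$. For $n\ge 2\beta$, the value $n$ lies in the range of $g_\beta$. Since $g_\beta$ is strictly increasing, applying $g_\beta^{-1}$ gives $k>g_\beta^{-1}(n)$, hence $k\ge\lceil g_\beta^{-1}(n)\rceil$. This needs no $\epsilon$, only $n$ large enough for $g_\beta^{-1}(n)$ to be defined.

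\emph{Upper bound.} Let $m=\lceil g_\alpha^{-1}(n)\rceil$, so $g_\alpha(m)\ge n$. If $m\ge N_\epsilon$ (the threshold in (\ref{eqn:between})), then $\binom{m}{\lfloor m/2\rfloor}>g_\alpha(m)\ge n$. By the minimality in Definition~\ref{def:s(n)} this gives $s(n)\le m$.

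The only subtle step is arranging $m\ge N_\epsilon$ by choosing the threshold on $n$ correctly. Since $g_\alpha^{-1}(n)\to\infty$ as $n\to\infty$ (because $W_{-1}(t)\to-\infty$ as $t\to0^-$), we can pick an integer $N$ such that $n\ge N$ implies $g_\alpha^{-1}(n)\ge N_\epsilon$ and $n\ge 2\beta$. Renaming this $N$ as the $N_\epsilon$ of the theorem then finishes the proof. We also need $\epsilon<\sqrt{2/\pi}$ so that $\alpha>0$ and $g_\alpha$ is defined; for larger $\epsilon$ the statement is vacuous or should be read with that restriction.
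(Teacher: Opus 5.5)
Your proof is correct and follows the same route as the paper: sandwich $\binom{k}{\lfloor k/2\rfloor}$ between $g_\alpha$ and $g_{\sqrt{2/\pi}}$ using (\ref{eqn:between}) and the global upper bound, then invert via (\ref{eqn:g-alpha-1}) and take ceilings. Your version is a little more careful than the paper's one-line argument. It compares only at integer points rather than through an interpolating function $f$. It applies the threshold to the index $m=\lceil g_\alpha^{-1}(n)\rceil$ rather than to $n$. It also makes explicit the implicit restriction $\epsilon<\sqrt{2/\pi}$.
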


The asymptotic behavior of the Lambert $W$ function, both its branches $W_0$ and $W_{-1}$,
are well-known and have been studied~\cite{Lambert-wiki,Lambert-Wolfram} and~\cite{Corless-and-co}.
For the second branch $W_{-1}$ we have that
\begin{equation}
    \label{eqn:Lambert-asymp}
W_{-1}(t) = \log(-t) - \log(-\log(-t)) + o(1)    
\end{equation}
for $t\in [-1/e,0[$ where $o(1)$ tends to zero as $t$ tends to zero from below. 
Consequently, for large $x$ we then obtain from (\ref{eqn:Lambert-asymp}) and 
(\ref{eqn:g-alpha-1}) that
\begin{eqnarray*}
g^{-1}_{\alpha}(x) & = & \frac{-1}{2\log2}W_{-1}\left(\frac{-2\alpha^2\log2}{x^2}\right) \\
& = & \frac{-1}{2\log2}\left(\log\left(\frac{2\alpha^2\log2}{x^2}\right) 
-\log\left(-\log\left(\frac{2\alpha^2\log2}{x^2}\right)\right) + o(1) \right) \\
& = & \lg x - \frac{1}{2}\lg(2\alpha^2\log2) + \frac{1}{2}\lg(2\log x - \log(2\alpha^2\log2)) + o(1) \\
& = & \lg x - \frac{1}{2}\lg(2\alpha^2\log2) + \frac{1}{2}\lg(2\log x) + o(1) \\
& = & \lg x + \frac{1}{2}\lg(\lg x) - \lg\alpha + o(1),
\end{eqnarray*}
where $o(1)$ is a function of $x$ that tends to zero when $x$ tends to infinity. 
So, we specifically have that 
\begin{equation}
\label{eqn:explicit-a}    
g^{-1}_{\alpha}(x) = \lg x + \frac{1}{2}\lg(\lg x) - \lg\alpha + r_{\alpha}(x)
\end{equation}
where $r_{\alpha}(x) \rightarrow 0$ as $x$ tends to infinity. 

Let $\epsilon > 0$ be a given. Since $t\mapsto\lg t$ is continuous and strictly 
increasing there is a ${\epsilon}' > 0$ such that 
$0 < -\lg\alpha - \frac{1}{2}\lg(\pi/2) < \epsilon/2$ where 
$\alpha = \sqrt{\frac{2}{\pi}} - {\epsilon}'$. Let $N_{\epsilon}$ be such that  
$|r_{\alpha}(n) - r_{\sqrt{2/\pi}}(n)| < -\lg\alpha - \frac{1}{2}\lg(\pi/2)$ for all $n\geq N_{\epsilon}$.
A reformulation of Theorem~\ref{thm:s-between} is then 
$\left\lceil g^{-1}_{\sqrt{2/\pi}}(n)\right\rceil \leq s(n) \leq \left\lceil g^{-1}_{\alpha}(n)\right\rceil$ 
for all $n \geq N_{\epsilon}$. By (\ref{eqn:explicit-a}) we have
\begin{eqnarray*}    
g^{-1}_{\alpha}(n) - g^{-1}_{\sqrt{2/\pi}}(n) & = &
\left(-\lg\alpha + r_{\alpha}(n)\right) - 
  \left(-\lg\left(\sqrt{2/\pi}\right) + r_{\sqrt{2/\pi}}(n)\right) \\
& = &  \left(-\lg\alpha - \frac{1}{2}\lg(\pi/2)\right) + \left(r_{\alpha}(n) - r_{\sqrt{2/\pi}}(n)\right) \\
& \in & [0,\epsilon[.
\end{eqnarray*}
Therefore we have the following.
\begin{corollary}
\label{cor:diff-epsilon}
For every $\epsilon > 0$ there is an $N_{\epsilon}\in {\nats}$ and a discrepancy function
$\delta : \nats \rightarrow \reals_+$ with $0\leq \delta(n) < \epsilon$ such that 
\[
s(n) = \left\lceil\lg n + \frac{1}{2}\lg(\lg n) + \frac{1}{2}\lg(\pi/2) + \delta(n)\right\rceil
\]
for all $n\geq N_{\epsilon}$. In particular, $\sigma(n)\leq s(n)\leq \sigma(n)+1$ for all $n\geq N_{\epsilon}$ 
where 
\[
\sigma(n) = \left\lceil\lg n + \frac{1}{2}\lg(\lg n) + \frac{1}{2}\lg(\pi/2)\right\rceil. 
\]
\end{corollary}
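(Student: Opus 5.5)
The statement to prove is Corollary~\ref{cor:diff-epsilon}. I will assemble it from Theorem~\ref{thm:s-between}, rewritten through $g_\alpha^{-1}$, together with the expansion (\ref{eqn:explicit-a}).

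\textbf{Step 1: squeeze $s(n)$.} Fix $\epsilon>0$ and choose $\epsilon'>0$ so small that, with $\alpha=\sqrt{2/\pi}-\epsilon'$,
\[
0<\eta:=-\lg\alpha-\tfrac12\lg(\pi/2)<\epsilon/2 .
\]
Such an $\epsilon'$ exists because $\lg$ is continuous and increasing. By (\ref{eqn:between}), for $n\geq N_{\epsilon'}$ we have
\[
g_\alpha(n)<\binom{n}{\lfloor n/2\rfloor}<g_{\sqrt{2/\pi}}(n).
\]
Put $k=s(n)$. By minimality of $k$,
\[
g_\alpha(k)<\binom{k}{\lfloor k/2\rfloor}\ \text{ and }\ \binom{k}{\lfloor k/2\rfloor}\geq n,
\qquad
g_{\sqrt{2/\pi}}(k-1)>\binom{k-1}{\lfloor (k-1)/2\rfloor}\ \text{ and }\ \binom{k-1}{\lfloor (k-1)/2\rfloor}<n .
\]
These inequalities need $k$ and $k-1$ to be at least $N_{\epsilon'}$. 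That holds for large $n$, since $s(n)\to\infty$.

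Monotonicity of $g_\alpha$ then gives the two bounds. From $\binom{k}{\lfloor k/2\rfloor}\geq n$ and the strict upper bound we get $g_{\sqrt{2/\pi}}(k)>n$, so $k>g^{-1}_{\sqrt{2/\pi}}(n)$. From $\binom{k-1}{\lfloor (k-1)/2\rfloor}<n$ we get $g_\alpha(k-1)<n$, so $k-1<g^{-1}_\alpha(n)$. Hence
\[
g^{-1}_{\sqrt{2/\pi}}(n)<s(n)<g^{-1}_\alpha(n)+1 .
\]
This is the integer form of Theorem~\ref{thm:s-between}, namely $\lceil g^{-1}_{\sqrt{2/\pi}}(n)\rceil\le s(n)\le\lceil g^{-1}_\alpha(n)\rceil$.

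\textbf{Step 2: pin down the ceiling.} By (\ref{eqn:explicit-a}), set $L(n)=\lg n+\frac12\lg\lg n+\frac12\lg(\pi/2)$. Then
\[
g^{-1}_{\sqrt{2/\pi}}(n)=L(n)+r_{\sqrt{2/\pi}}(n),
\qquad
g^{-1}_\alpha(n)=L(n)+\eta+r_\alpha(n).
\]
Take $N_\epsilon$ so large that $|r_{\sqrt{2/\pi}}(n)|$ and $|r_\alpha(n)|$ are both below $\min(\eta,\epsilon/4)$ for $n\ge N_\epsilon$.

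\textbf{Main obstacle.} The sign of $r_{\sqrt{2/\pi}}$ is not controlled. So the lower ceiling is only $\lceil L(n)-\text{small}\rceil$, which could equal $\lceil L(n)\rceil-1$ if $L(n)$ sits just above an integer. To get the exact form with $\delta(n)\ge0$, I will not use the Lambert asymptotics for the lower bound. I will use directly that $\binom{k}{\lfloor k/2\rfloor}<g_{\sqrt{2/\pi}}(k)$ for \emph{all} $k$ (Observation~\ref{obs:sim-less}), which gives $s(n)\ge g^{-1}_{\sqrt{2/\pi}}(n)$ exactly. I then need to argue $r_{\sqrt{2/\pi}}(n)\ge0$ eventually, or absorb it.

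Plan: sharpen (\ref{eqn:Lambert-asymp}) to $W_{-1}(t)=\log(-t)-\log(-\log(-t))+\frac{\log(-\log(-t))}{\log(-t)}+\cdots$. This shows the correction term in $g^{-1}$ is positive of order $\lg\lg n/\lg n$, so $r_{\sqrt{2/\pi}}(n)>0$ for large $n$. I also need to check that the dropped term $-\frac12\lg(1-\log(2\alpha^2\log2)/(2\log x))$ is handled with its sign.

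\textbf{Conclusion.} With both remainders under control, $s(n)=\lceil L(n)+\delta(n)\rceil$ with $\delta(n)\in[0,\eta+\epsilon/4]\subset[0,\epsilon)$, where $\delta(n)$ is chosen in that interval to realize the integer $s(n)$. Since $\epsilon<1$ may be assumed, $\lceil L+\delta\rceil\in\{\lceil L\rceil,\lceil L\rceil+1\}$, which yields $\sigma(n)\le s(n)\le\sigma(n)+1$.

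If the sign analysis resists, the fallback is to prove only the weaker sandwich $\sigma(n)-1\le s(n)\le\sigma(n)+1$ from Step 1, which needs no sign information on the remainders. I would then flag the exact form with $\delta(n)\ge0$ as requiring the finer Lambert expansion.
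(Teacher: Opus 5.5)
Your argument follows the paper's route. You use the integer sandwich $\lceil g^{-1}_{\sqrt{2/\pi}}(n)\rceil\le s(n)\le\lceil g^{-1}_\alpha(n)\rceil$ from Theorem~\ref{thm:s-between}, then (\ref{eqn:explicit-a}) with $\alpha$ chosen so that $0<\eta<\epsilon/2$. Your ``main obstacle'' is real, and the paper itself passes over it. The paper stops at $g^{-1}_\alpha(n)-g^{-1}_{\sqrt{2/\pi}}(n)\in[0,\epsilon[$. That only yields $s(n)=\lceil L(n)+r_{\sqrt{2/\pi}}(n)+\delta'(n)\rceil$ with $0\le\delta'(n)<\epsilon$, and the paper then drops $r_{\sqrt{2/\pi}}(n)$ without comment. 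Given the upper bound, the existence of $\delta(n)\ge0$ is equivalent to $s(n)\ge\lceil L(n)\rceil$. The sandwich alone only gives $s(n)\ge\lceil L(n)+r_{\sqrt{2/\pi}}(n)\rceil$, which falls short when $r_{\sqrt{2/\pi}}(n)<0$ and $L(n)$ sits just above an integer, exactly as you say. Writing the upper side directly as $g^{-1}_\alpha(n)=L(n)+\eta+r_\alpha(n)$, as you do, also avoids having to absorb $r_{\sqrt{2/\pi}}(n)$ into $\delta(n)<\epsilon$ on that side.

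Your plan via the next term of the $W_{-1}$ expansion is sound, but you leave it unexecuted. It can be closed in a few lines without Lambert $W$:
\begin{itemize}
\item Taking $\lg$ of $g_\beta(y)=x$ gives exactly $y=\lg x-\lg\beta+\tfrac12\lg y$ for $y=g_\beta^{-1}(x)$. Comparing with (\ref{eqn:explicit-a}) gives $r_\beta(x)=\tfrac12\lg\bigl(g_\beta^{-1}(x)/\lg x\bigr)$.
\item For $x\ge2$ and $\beta<1$ one has $g_\beta(\lg x)=\beta x/\sqrt{\lg x}<x$. Hence $g_\beta^{-1}(x)>\lg x$ and $r_\beta(x)>0$.
\item Take $\beta=\sqrt{2/\pi}$ and use the bound $\binom{k}{\lfloor k/2\rfloor}<g_{\sqrt{2/\pi}}(k)$, valid for every $k$. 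This gives $s(n)>g^{-1}_{\sqrt{2/\pi}}(n)>L(n)$, i.e.\ $s(n)\ge\lceil L(n)\rceil$ for all $n\ge2$.
\end{itemize}
Together with your upper bound this proves the corollary, so the fallback is unnecessary. The same identity also shows $r_\beta(x)\to0$, since $y=\lg x+O(\lg\lg x)$.

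Two small slips:
\begin{itemize}
\item The term dropped between the third and fourth lines of the paper's computation enters $r_\alpha$ as $+\tfrac12\lg\bigl(1-\log(2\alpha^2\log2)/(2\log x)\bigr)$, not with a minus sign. It is positive because $2\alpha^2\log2<1$, so it reinforces the $\lg\lg n/\lg n$ term rather than competing with it.
\item In Step~1 you display the halves of (\ref{eqn:between}) that you do not use. The argument actually uses $\binom{k}{\lfloor k/2\rfloor}<g_{\sqrt{2/\pi}}(k)$ and $g_\alpha(k-1)<\binom{k-1}{\lfloor (k-1)/2\rfloor}$.
\end{itemize}
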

Note that the last displayed form of $\sigma(n)$ yields a more accurate description 
of the asymptotic behavior of $s(n)$ than what 
is mentioned in A305233~\cite{A305233}.

By Theorem~\ref{thm:intrep} we can 
restate the last sentence of Corollary~\ref{cor:diff-epsilon} in terms of representing sets.
\begin{corollary}
\label{cor:rep-nice}
There is an $N\in\nats$ such that for every $n\geq N$ the set $[n]$ can be represented by
at most $\left\lceil\lg n + \frac{1}{2}\lg(\lg n) + \frac{1}{2}\lg(\pi/2)\right\rceil + 1$
sets from $2^{[n]}$. This upper bound is the best possible up to a difference of $1$.
\end{corollary}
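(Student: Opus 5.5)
The plan is to deduce Corollary~\ref{cor:rep-nice} directly from Theorem~\ref{thm:intrep} and Corollary~\ref{cor:diff-epsilon}.

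\emph{Upper bound.} Fix any $\epsilon$ with $0<\epsilon\le 1$, say $\epsilon = 1$, and let $N := N_{\epsilon}$ be the integer from Corollary~\ref{cor:diff-epsilon}. For $n\geq N$ that corollary gives $s(n)\leq \sigma(n)+1$. Since $\sigma(n)$ is a ceiling, $\sigma(n)+1$ is exactly the displayed bound. Theorem~\ref{thm:intrep} says ${\rho}(n)=s(n)$, so some family ${\cal{S}}\subseteq 2^{[n]}$ with $|{\cal{S}}| = s(n)\leq\sigma(n)+1$ represents $[n]$. If a family of exactly that size is wanted, we can pad ${\cal{S}}$ with further subsets, since adding sets keeps a family representing.

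\emph{Near-optimality.} We must show that no bound smaller by two or more can hold. Corollary~\ref{cor:diff-epsilon} also gives $s(n)\geq\sigma(n)$ for $n\geq N$. By Theorem~\ref{thm:intrep}, no family of fewer than $s(n)$ subsets represents $[n]$. Hence every representing family has at least $\sigma(n)$ members, which is the stated upper bound minus $1$. So the true minimum ${\rho}(n)$ lies in $\{\sigma(n),\sigma(n)+1\}$, and the bound is sharp up to a difference of $1$.

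\emph{Where care is needed.} The only point to check is the inequality $\sigma(n)\le s(n)\le\sigma(n)+1$ inside Corollary~\ref{cor:diff-epsilon}. It follows from $s(n)=\lceil x+\delta(n)\rceil$ with $x = \lg n + \frac12\lg(\lg n)+\frac12\lg(\pi/2)$ and $0\le\delta(n)<\epsilon\le 1$, because then
\[
\lceil x\rceil\le\lceil x+\delta(n)\rceil\le\lceil x\rceil+1 .
\]
This is why we insist on $\epsilon\le1$. Given that earlier result, nothing else is an obstacle.
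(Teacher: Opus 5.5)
Your proposal is correct and matches the paper's own argument: the paper obtains Corollary~\ref{cor:rep-nice} by restating the bound $\sigma(n)\le s(n)\le\sigma(n)+1$ from Corollary~\ref{cor:diff-epsilon} in terms of representing families, using $\rho(n)=s(n)$ from Theorem~\ref{thm:intrep}. Your extra details, fixing $\epsilon\le 1$ and checking $\lceil x\rceil\le\lceil x+\delta(n)\rceil\le\lceil x\rceil+1$, only make explicit what the paper leaves implicit.
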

\begin{rmk*}
If one is only concerned about the leading $\lg n$ term for the asymptotic behavior of $s(n)$ 
from Corollary~\ref{cor:diff-epsilon}, then a slightly quicker argument similar to Lemma 3.26 from~\cite{A-A-Dawkins} can be used
to show that if $f,g : {\reals}_{+} \rightarrow {\reals}_{+}$ be increasing and bijective functions such that
(a) $f(x) < g(x)$ for all sufficiently large $x$, (b) $f(x) \sim g(x)$ as $x\rightarrow \infty$, 
and (c) $g(x)/x$ is an increasing function for all sufficiently large $x$, then 
$f^{-1}(x)\sim g^{-1}(x)$ as $x\rightarrow\infty$. 
Since $e^x/x^k$ is an increasing function for all $x$ large enough we have that for 
$g(x) = \sqrt{\frac{2}{\pi}}\frac{2^x}{\sqrt{x}}$ the function $\frac{g(x)}{x}$ is an increasing
real function for all $x$ large enough. By Observation~\ref{obs:sim-less} we would then
be able to obtain $s(n) = \lceil f^{-1}(n)\rceil \sim \lg n$ as $n$ tends to infinity.
This is however weaker than the above Corollary~\ref{cor:diff-epsilon}.
\end{rmk*}

\section{Summary}
\label{sec:summary}

We briefly discuss some of the main results in this article. The novel
contributions in this paper are from Sections~\ref{sec:representing}, \ref{sec:set-partitions}
and \ref{sec:formula}. 

In Section~\ref{sec:representing} the main result is
Theorem~\ref{thm:intrep} that states that the minimum number of sets from $2^{[n]}$
that are needed to represent $[n]$ is exactly $s(n)$ from Definition~\ref{def:s(n)}; 
the smallest cardinality of a set that contains $n$ subsets where no set contains another. 

In Section~\ref{sec:set-partitions} the main result is Theorem~\ref{thm:seta}
which generalizes Theorem~\ref{thm:intrep} and describes how accurately one can
pinpoint an element in $[n]$ when we have $s(n)$ or fewer sets
from $2^{[n]}$. 

In Section~\ref{sec:formula} we obtain a tight asymptotic formula for $s(n)$ 
for large $n$ in Corollary~\ref{cor:diff-epsilon}, which is based on Theorem~\ref{thm:s-between}. 
By "tight" we mean that for large enough $n$ the given lower
bound and upper bound differ by at most $1$. Casually speaking, we see from 
Corollaries~\ref{cor:diff-epsilon} and \ref{cor:rep-nice} 
that the minimum number $\rho(n) = s(n)$ of labels
needed to identify each element in $[n]$ is then at most $\ceil{\lg n +\frac{1}{2}\lg(\lg n)} + 2$
which is considerably better than the number of roughly $2\lg n$ needed when applying 
classical binary search, as shown in Example~\ref{exa:kcube}. It is finally worth
emphasizing that Corollary~\ref{cor:diff-epsilon} is the best possible such lower
bound for the number of labels.

\bibliographystyle{amsalpha}
\bibliography{Geirbib}

\end{document}